\numberwithin{equation}{section}
\theoremstyle{definition}
\newtheorem{example}{Example}[section]
\newtheorem{lemma}[example]{Lemma}
\newtheorem{theorem}[example]{Theorem}
\newtheorem{proposition}[example]{Proposition}
\DeclareMathOperator{\add}{\mathsf{add}}
\DeclareMathOperator{\proj}{\mathsf{proj}}
\DeclareMathOperator{\inj}{\mathsf{inj}}
\DeclareMathOperator{\Fac}{\mathsf{Fac}}
\DeclareMathOperator{\Sub}{\mathsf{Sub}}
\DeclareMathOperator{\Mod}{\mathsf{mod}}
\DeclareMathOperator{\thick}{\mathsf{thick}}
\title{Wide subcategories are semistable}
\author{toshiya yurikusa}
\address{T. Yurikusa: Graduate School of Mathematics, Nagoya University, Chikusa-ku, Nagoya, 464-8602 Japan}
\email{m15049q@math.nagoya-u.ac.jp}
\begin{document}

\keywords{Representation theory of finite dimensional algebras, wide subcategories, semistable subcategories, $\tau$-tilting theory}

\maketitle
\begin{abstract}
 For an arbitrary finite dimensional algebra $\Lambda$, we prove that any wide subcategory of $\Mod \Lambda$ satisfying a certain finiteness condition is $\theta$-semistable for some stability condition $\theta$. More generally, we show that wide subcategories of $\Mod \Lambda$ associated with two-term presilting complexes of $\Lambda$ are semistable. This provides a complement for Ingalls-Thomas-type bijections for finite dimensional algebras.
\end{abstract}


\section{Introduction}

 This classification problem of subcategories is a well studied subject in representation theory, algebraic geometry and algebraic topology (e.g. \cite{Hop,N,Th}). Among others, we refer to \cite{B,Hov,IT,KS,MS,Ta} for recent developments on the classification of {\it wide subcategories}, which are full subcategories of an abelian category closed under kernels, cokernels and extensions.

 Important examples of wide subcategories are given by geometric invariant theory for quiver representations \cite{K}. Recall that a stability condition on $\Mod \Lambda$ for a finite dimensional algebra $\Lambda$ is a linear form $\theta$ on $K_0(\Mod \Lambda) \otimes_{\mathbb Z} {\mathbb R}$, where $K_0(\Mod \Lambda)$ is the Grothendieck group of $\Mod \Lambda$. We say that $M \in \Mod \Lambda$ is $\theta$-{\it semistable} if $\theta(M)=0$ and $\theta(L) \le 0$ for any submodule $L$ of $M$, or equivalently, $\theta(N) \ge 0$ for any factor module $N$ of $M$. The full subcategory of $\theta$-semistable $\Lambda$-modules is called the $\theta$-{\it semistable subcategory} of $\Mod \Lambda$. It is basic that semistable subcategories of $\Mod \Lambda$ are wide.

 For quiver representations, Ingalls and Thomas \cite{IT} gave bijections between wide/semistable subcategories and other important objects: For the path algebra $kQ$ of a finite connected acyclic quiver $Q$ over a field $k$, there are bijections (called {\it Ingalls-Thomas bijections}) between the following objects, where we refer to Subsection \ref{prelim} for unexplained terminologies.\par
\begin{enumerate}
 \item Isomorphism classes of basic support tilting modules in $\Mod (kQ)$.
 \item Functorially finite torsion classes in $\Mod (kQ)$.
 \item Functorially finite wide subcategories of $\Mod (kQ)$.
 \item Functorially finite semistable subcategories of $\Mod (kQ)$.
\end{enumerate}
 They also proved that (1)-(4) above correspond bijectively with the clusters in the cluster algebra of $Q$ and the isomorphism classes of basic cluster tilting objects in the cluster category of $kQ$.


 Later, works of Adachi-Iyama-Reiten \cite{AIR} and Marks-Stovicek \cite{MS} gave the following Ingalls-Thomas-type bijections for an arbitrary finite dimensional $k$-algebra, where we refer to Subsection \ref{prelim} for unexplained terminologies and explicit bijections.

\begin{theorem}\label{MSresult}\cite[Theorem 0.5]{AIR}\cite[Theorem 3.10]{MS}
 Let $\Lambda$ be a finite dimensional algebra over a field $k$. There are bijections between the following objects:\par
\begin{enumerate}
 \item Isomorphism classes of basic support $\tau$-tilting modules in $\Mod \Lambda$.
 \item[$(1')$] Isomorphism classes of basic two-term silting complexes in ${\mathsf K}^{{\rm b}}(\proj \Lambda)$.
 \item Functorially finite torsion classes in $\Mod \Lambda$.
 \item[$(2')$] Functorially finite torsion free classes in $\Mod \Lambda$.
 \item Left finite wide subcategories of $\Mod \Lambda$.
\end{enumerate}
\end{theorem}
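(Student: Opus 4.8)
The plan is to assemble the bijection from several independently-established pieces, following \cite{AIR} and \cite{MS}; the backbone is $\tau$-tilting theory. First I would recall the bijection $(1)\leftrightarrow(2)$ of \cite{AIR}: sending a support $\tau$-tilting module $M$ to $\Fac M$ lands in the functorially finite torsion classes, and conversely a functorially finite torsion class $\mathcal T$ admits a basic Ext-projective generator $P(\mathcal T)$, unique up to isomorphism, which turns out to be support $\tau$-tilting and to satisfy $\Fac P(\mathcal T)=\mathcal T$; checking that these two assignments are mutually inverse is the core of \cite{AIR}. The key technical inputs are the existence of minimal left and right approximations guaranteed by functorial finiteness, together with the translation between Ext-projectivity in a torsion class and the $\tau$-rigidity condition $\operatorname{Hom}_\Lambda(M,\tau M)=0$.

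Next I would record two more formal bijections. For $(1)\leftrightarrow(1')$ one uses \cite{AIR} again: to a support $\tau$-tilting module $M$, equipped with its associated projective summand $P$ satisfying $\operatorname{Hom}_\Lambda(P,M)=0$, assign the complex $(P_1\to P_0)\oplus P[1]$, where $P_1\to P_0$ is a minimal projective presentation of $M$; one verifies that this complex is two-term silting, that its additive closure recovers the pair $(M,P)$, and that every two-term silting complex of $\mathsf K^{\mathrm b}(\proj\Lambda)$ arises this way. For $(2)\leftrightarrow(2')$ one uses classical torsion-pair duality: $\mathcal T\mapsto\mathcal T^{\perp}=\{X:\operatorname{Hom}_\Lambda(\mathcal T,X)=0\}$ and $\mathcal F\mapsto{}^{\perp}\mathcal F$ are mutually inverse inclusion-reversing bijections between torsion classes and torsion-free classes, and the point to verify is that $\mathcal T$ is functorially finite if and only if $\mathcal T^{\perp}$ is, which holds because over a finite-dimensional algebra one side of a torsion pair is contravariantly finite precisely when the other is covariantly finite.

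The substantive step is $(2)\leftrightarrow(3)$, the Ingalls-Thomas correspondence in the generality of \cite{MS} (extending \cite{IT}). Here I would define, for a torsion class $\mathcal T$,
\[
  \mathsf W(\mathcal T)=\{\,X\in\mathcal T : \text{every }g\colon Y\to X\text{ with }Y\in\mathcal T\text{ has }\ker g\in\mathcal T\,\},
\]
and prove directly that $\mathsf W(\mathcal T)$ is closed under kernels, cokernels and extensions, hence is a wide subcategory. Conversely, to a wide subcategory $\mathcal W$ one attaches $\mathsf T(\mathcal W)$, the smallest torsion class containing $\mathcal W$. One then shows $\mathsf T(\mathsf W(\mathcal T))=\mathcal T$ for every functorially finite $\mathcal T$ and $\mathsf W(\mathsf T(\mathcal W))=\mathcal W$ for every wide $\mathcal W$ with $\mathsf T(\mathcal W)$ functorially finite; declaring $\mathcal W$ \emph{left finite} precisely when $\mathsf T(\mathcal W)$ is functorially finite then makes these maps mutually inverse bijections between $(3)$ and $(2)$, and chaining with the previous paragraphs yields the full statement.

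I expect the main obstacle to be exactly this last correspondence, and within it the identity $\mathsf W(\mathsf T(\mathcal W))=\mathcal W$ together with the matching of the finiteness conditions on the two sides. The inclusion $\mathcal W\subseteq\mathsf W(\mathsf T(\mathcal W))$ is formal from exactness of $\mathcal W$, but the reverse inclusion forces one to control kernels of arbitrary morphisms out of the possibly large torsion class $\mathsf T(\mathcal W)$ into objects of $\mathsf W(\mathsf T(\mathcal W))$, and the equivalence ``$\mathcal W$ left finite $\iff$ $\mathsf W(\mathcal T)$ functorially finite'' is where finite-dimensionality of $\Lambda$ is genuinely used: one feeds in the Ext-projective generator $P(\mathcal T)$ from the first paragraph, or equivalently the two-term silting data, to produce a single finitely generated module whose associated wide subcategory is $\mathsf W(\mathcal T)$, so that every approximation in sight is detected by finite data. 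Once these identities are secured, the theorem follows by transport of structure along $(1)\leftrightarrow(1')\leftrightarrow(2)\leftrightarrow(2')$.
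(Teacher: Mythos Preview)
The paper does not prove Theorem~\ref{MSresult}; it is quoted as a background result from \cite{AIR} and \cite{MS}, and the explicit bijections are merely recalled in Subsection~\ref{prelim} (with the functorial finiteness of $\mathcal T^{\perp}$ attributed to \cite{S}). Your outline is a faithful sketch of how those references establish the result and is consistent with the bijections the paper lists, so there is no discrepancy to flag.

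One small remark: the paper records the bijection $(1')\to(3)$ in the form $T\mapsto\mathcal W^{T}=\Fac{\rm H}^0(T)\cap{\rm H}^0(T_{\rho})^{\perp}$, using the decomposition $T=T_{\lambda}\oplus T_{\rho}$ coming from the triangle \eqref{triangle}, rather than via the Marks--\v{S}\v{t}ov\'{\i}\v{c}ek operations $\mathsf W(-)$ and $\mathsf T(-)$ that you describe. These two descriptions agree (the paper later shows $\mathcal W^{T}=\mathcal W_{T_{\rho}}$ in Lemma~\ref{2wide}), but if you want to match the paper's conventions you should note that the composite $(1')\to(2)\to(3)$ lands on $\mathcal W^{T}$ in this explicit form.
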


 Notice that the statement for semistable subcategories of $\Mod \Lambda$ is missing in Theorem \ref{MSresult}. The aim of this paper is to prove the following complement of Theorem \ref{MSresult}.


\begin{theorem}\label{main1}
 For a finite dimensional algebra $\Lambda$ over a field $k$, the following objects are the same.\par
\begin{enumerate}
 \item[(3)] Left finite wide subcategories of $\Mod \Lambda$.
 \item[(4)] Left finite semistable subcategories of $\Mod \Lambda$.
\end{enumerate}
 Therefore, there are bijections between (1)-(4) in Theorem \ref{MSresult}.
\end{theorem}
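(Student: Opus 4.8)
The plan is to prove the two inclusions "semistable $\Rightarrow$ wide" and "wide $\Rightarrow$ semistable" separately, at the level of the *left finite* subcategories, and then invoke Theorem \ref{MSresult} for the remaining bijections. The inclusion (4) $\subseteq$ (3) is already noted in the introduction: any $\theta$-semistable subcategory is wide, and it remains to check that $\theta$-semistability is preserved under the left-finiteness bookkeeping, i.e. that a left finite semistable subcategory is again of the form appearing in Theorem \ref{MSresult}(3). The substantial direction is (3) $\subseteq$ (4): given a left finite wide subcategory $\mathcal W \subseteq \Mod \Lambda$, I must exhibit a stability condition $\theta \in K_0(\Mod \Lambda)^\ast \otimes_{\mathbb Z} {\mathbb R}$ whose semistable subcategory is exactly $\mathcal W$.

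First I would recall, from the $\tau$-tilting dictionary of Theorem \ref{MSresult}, that a left finite wide subcategory $\mathcal W$ corresponds to a basic support $\tau$-tilting pair, equivalently a basic two-term silting complex $P \in {\mathsf K}^{\mathrm b}(\proj \Lambda)$; write $P = P_0 \oplus P_1$ (summands in degrees $0$ and $1$) and let $[P] \in K_0(\proj \Lambda)$ be its class. The natural candidate for the stability function is built from this class: using the perfect pairing between $K_0(\proj \Lambda)$ and $K_0(\Mod \Lambda)$ given by $\langle [Q],[M]\rangle = \dim_k \mathrm{Hom}_{\mathsf K^{\mathrm b}(\proj \Lambda)}(Q, M)$ (viewing $M$ as a stalk complex), one sets $\theta = \langle [P_1] - [P_0], - \rangle$, or more precisely a generic element in the cone of stability conditions associated to the silting complex. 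The key computation is then: $\theta(M) = 0$ and $\theta(N) \ge 0$ for all quotients $N$ of $M$ if and only if $M$ lies in the wide subcategory $\mathcal W(P)$ attached to $P$. Here I would lean on the description of $\mathcal W(P)$ as $\{ M : \mathrm{Hom}(P_1, M) = 0 = \mathrm{Hom}(P_0, M[1]) \text{ in the appropriate sense}\}$, the intersection of the torsion class $\mathcal T(P)$ with the torsion-free class of the "shifted" silting object, and translate the hom-vanishing conditions into sign conditions on the pairing; the statement that any two-term *presilting* complex already produces a semistable subcategory (promised in the abstract as the more general result) suggests that the proof is set up to run for presilting complexes and then specialised.

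The key steps, in order, would be: (i) fix notation for the pairing $\langle -,-\rangle$ and for the wide subcategory $\mathcal W(P)$, $\mathcal W^{\mathrm{st}}(\theta)$ attached to a two-term (pre)silting complex $P$ and a stability condition $\theta$; (ii) show $\mathcal W^{\mathrm{st}}(\theta_P) \subseteq \mathcal W(P)$ by testing the defining inequalities of semistability against the canonical sub/quotient structure coming from $P$ — for a quotient $N$ of $M$ in $\mathcal W^{\mathrm{st}}$, $\theta_P(N) \ge 0$; (iii) show the reverse inclusion $\mathcal W(P) \subseteq \mathcal W^{\mathrm{st}}(\theta_P)$ by checking that every module in $\mathcal W(P)$ and each of its quotients satisfies the sign conditions, which amounts to a left-exactness/right-exactness argument on the Hom-functors $\mathrm{Hom}(P_i, -)$ along short exact sequences; (iv) conclude that $\mathcal W(P) = \mathcal W^{\mathrm{st}}(\theta_P)$, hence every left finite wide subcategory is semistable, and combine with the easy inclusion (4) $\subseteq$ (3) and Theorem \ref{MSresult} to get all the bijections.

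The main obstacle I anticipate is step (iii): controlling *all* quotient modules of an arbitrary $M \in \mathcal W(P)$ simultaneously. Semistability is a condition on the whole submodule/quotient lattice of $M$, whereas membership in $\mathcal W(P)$ is phrased through finitely many Hom- and Ext-vanishings against $P_0, P_1$; bridging these requires knowing that $\mathcal W(P)$, being wide, is closed under the relevant sub/quotient operations *inside itself*, but a quotient of $M$ taken in $\Mod \Lambda$ need not lie in $\mathcal W(P)$, so one must argue that its $\theta_P$-value is still non-negative using the torsion-pair structure (the quotient still lies in the torsion class $\mathcal T(P)$, which forces one of the two sign contributions). A secondary technical point is choosing $\theta$ generically within the silting cone so that the semistable subcategory is exactly $\mathcal W(P)$ and not something strictly larger; this genericity argument, and the verification that it does not disturb the integrality/finiteness needed to stay within the classes of Theorem \ref{MSresult}, is where I would expect to spend the most care.
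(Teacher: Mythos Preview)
Your overall strategy matches the paper's: pass through the two-term silting complex $T$ attached to $\mathcal W$ via Theorem \ref{MSresult}, build $\theta$ from $T$ through the Euler pairing, and compare the $\theta$-semistable subcategory with $\mathcal W$ using torsion pairs. But your choice of $\theta$ is wrong in a way that makes the argument collapse. You take $\theta = \langle [P_1]-[P_0],-\rangle$ for the \emph{terms} of the complex, i.e.\ essentially $\pm\langle [T],-\rangle$, or ``a generic element in the cone'' $C(T)$ spanned by all indecomposable summands of $T$. For any such $\theta$ the semistable subcategory is $\mathcal T_T^{+}\cap\mathcal F_T^{-}$ (this is Theorem \ref{presiltsemistable} with $U=T$); but since $T$ is \emph{silting}, \eqref{silttor} gives $(\mathcal T_T^{+},\mathcal F_T^{+})=(\mathcal T_T^{-},\mathcal F_T^{-})$, whence $\mathcal T_T^{+}\cap\mathcal F_T^{-}=\{0\}$. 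Concretely, a nonzero $M\in\mathcal W^T$ lies in $\Fac{\rm H}^0(T_\lambda)$, so $\langle T,M\rangle=\dim_k{\rm Hom}_\Lambda({\rm H}^0(T),M)>0$ and $M$ is never $\theta$-semistable. Your worry that a generic $\theta$ might give something ``strictly larger'' than $\mathcal W$ is therefore exactly backwards: the interior of a maximal silting cone always yields the zero subcategory.

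The missing ingredient is the decomposition $T=T_\lambda\oplus T_\rho$ coming from the triangle $\Lambda\to T'\to T''\to\Lambda[1]$ in \eqref{triangle}; this is a splitting of the silting object into direct summands, not of its terms in each degree, and your ``$P=P_0\oplus P_1$ (summands in degrees $0$ and $1$)'' conflates the two. The paper takes $\theta=\sum_X a_X\langle X,-\rangle$ over the indecomposable summands $X$ of $T_\rho$ \emph{only}, so that $\theta$ lives on a proper face of $C(T)$. The key identification (Lemma \ref{2wide}) is $\mathcal W^T=\mathcal T_{T_\rho}^{+}\cap\mathcal F_{T_\rho}^{-}=\mathcal W_{T_\rho}$: one is really working with the \emph{presilting} complex $T_\rho$, for which the two torsion pairs are genuinely distinct and sandwich the desired wide subcategory between them. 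With this correct $\theta$, your steps (ii)--(iii) do go through, and more easily than you anticipate: Proposition \ref{thetavalue} gives strict sign control on each of $\mathcal T_{T_\rho}^{\pm}$, $\mathcal F_{T_\rho}^{\pm}$, any submodule of $M\in\mathcal W_{T_\rho}$ already lies in $\mathcal F_{T_\rho}^{-}$ (forcing $\theta\le 0$ with no further Hom computation), and the reverse inclusion uses the canonical sequences of the two torsion pairs rather than arbitrary sub/quotients.
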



 To construct a stability condition $\theta$ for a given left finite wide subcategory, we need the following preparation. Let $T$ be a basic two-term silting complex in ${\mathsf K}^{{\rm b}}(\proj \Lambda)$. Then there is a decomposition $T=T_{\lambda} \oplus T_{\rho}$ and a triangle
\begin{equation}\label{triangle}
 \Lambda \rightarrow T' \rightarrow T'' \rightarrow \Lambda[1]
\end{equation}
in ${\mathsf K}^{{\rm b}}(\proj \Lambda)$, where $\add T'=\add T_{\lambda}$ and $\add T''=\add T_{\rho}$ (see \cite[Proposition 2.24]{AI}). Then $T$ corresponds to the left finite wide subcategory
\begin{equation}\label{W^T}
 \mathcal{W}^{T}:=\Fac {\rm H}^0(T) \cap {\rm H}^0(T_{\rho})^{\perp}
\end{equation}
via the bijection between $(1')$ and (3) in Theorem \ref{MSresult} (see Subsection \ref{prelim}).

 Our Theorem \ref{main1} is a consequence of the following result, where $\langle -,-\rangle$ is the Euler form (see (\ref{Euler})).

\begin{theorem}\label{main}
 Let $\Lambda$ be a finite dimensional algebra over a field $k$. Let $T$ be a basic two-term silting complex in ${\mathsf K}^{{\rm b}}(\proj \Lambda)$. We consider an ${\mathbb R}$-linear form $\theta$ defined by
\[
\sum_{X} a_{X} \langle X,-\rangle : K_0(\Mod \Lambda) \otimes_{\mathbb Z} {\mathbb R} \rightarrow {\mathbb R},
\]
where $X$ runs over all indecomposable direct summands of $T_{\rho}$, and $a_{X}$ is an arbitrary positive real number for each $X$. Then ${\mathcal W}^T$ is the $\theta$-semistable subcategory of $\Mod \Lambda$.
\end{theorem}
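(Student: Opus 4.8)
The plan is to first rewrite $\mathcal{W}^{T}$ in a form adapted to semistability, and then prove the two inclusions. Put $\mathcal{T}:=\Fac\mathrm{H}^{0}(T)$ and $\mathcal{F}:=\mathrm{H}^{0}(T)^{\perp}$; since $T$ is silting, $(\mathcal{T},\mathcal{F})$ is a functorially finite torsion pair, and (as recalled in Subsection \ref{prelim}) $\mathcal{T}=\{M\in\Mod\Lambda:\mathrm{Hom}_{\mathsf{D}^{\mathrm{b}}(\Lambda)}(T,M[1])=0\}$ and $\mathcal{F}=\{M\in\Mod\Lambda:\mathrm{Hom}_{\mathsf{D}^{\mathrm{b}}(\Lambda)}(T,M)=0\}$. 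I would first record the elementary identity $\mathrm{Hom}_{\mathsf{D}^{\mathrm{b}}(\Lambda)}(U,M)\cong\mathrm{Hom}_{\Lambda}(\mathrm{H}^{0}(U),M)$ for any two-term complex $U\in\mathsf{K}^{\mathrm{b}}(\proj\Lambda)$ and any module $M$: a chain map from $U$ into $M$ placed in degree $0$ admits no nonzero homotopies and factors through $\mathrm{H}^{0}(U)=\operatorname{coker}(U^{-1}\to U^{0})$. Applying this to the indecomposable summands $X$ of $T_{\rho}$, and using $\mathrm{Hom}(T,M[1])=0$ for $M\in\mathcal{T}$, one gets $\langle X,M\rangle=\dim_{k}\mathrm{Hom}_{\Lambda}(\mathrm{H}^{0}(X),M)\geq0$ on $\mathcal{T}$, so $\theta$ is nonnegative on $\mathcal{T}$ and $\theta(M)=0$ if and only if $\mathrm{Hom}_{\Lambda}(\mathrm{H}^{0}(T_{\rho}),M)=0$. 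This gives the reformulation $\mathcal{W}^{T}=\{M\in\mathcal{T}:\theta(M)=0\}$, from which one inclusion is immediate: if $M\in\mathcal{W}^{T}$ and $N$ is a factor module of $M$, then $N\in\mathcal{T}$ (torsion classes are closed under quotients), so $\theta(N)\geq0$, and together with $\theta(M)=0$ this is exactly $\theta$-semistability of $M$.

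The core of the converse is the key lemma: every $M\in\mathcal{F}$ with $\theta(M)=0$ is zero. To prove it I would apply $\mathrm{Hom}_{\mathsf{D}^{\mathrm{b}}(\Lambda)}(-,M)$ to the triangle (\ref{triangle}), where $\add T'=\add T_{\lambda}$ and $\add T''=\add T_{\rho}$. As $\Lambda$ is projective and $T',T''$ are two-term, the resulting long exact sequence collapses to
\[
0\to\mathrm{Hom}(T'',M)\to\mathrm{Hom}(T',M)\to M\to\mathrm{Hom}(T'',M[1])\to\mathrm{Hom}(T',M[1])\to0 .
\]
For $M\in\mathcal{F}$ one has $\mathrm{Hom}(T',M)=\mathrm{Hom}(T'',M)=0$ (since $T',T''\in\add T$), hence a monomorphism $M\hookrightarrow\mathrm{Hom}(T'',M[1])$. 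For the same reason $\mathrm{Hom}(X,M)=0$ for every indecomposable summand $X$ of $T_{\rho}$, so $\langle X,M\rangle=-\dim_{k}\mathrm{Hom}(X,M[1])\leq0$; therefore $\theta(M)=0$ forces $\mathrm{Hom}(T_{\rho},M[1])=0$, i.e. $\mathrm{Hom}(T'',M[1])=0$, and the monomorphism yields $M=0$.

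To finish, let $M$ be $\theta$-semistable with canonical sequence $0\to tM\to M\to M/tM\to0$, $tM\in\mathcal{T}$, $M/tM\in\mathcal{F}$. Since $tM$ is a submodule of $M$ one has $\theta(tM)\leq0$, while $tM\in\mathcal{T}$ gives $\theta(tM)\geq0$; hence $\theta(tM)=0$ and $\theta(M/tM)=\theta(M)-\theta(tM)=0$. The key lemma applied to $M/tM\in\mathcal{F}$ shows $M/tM=0$, so $M=tM\in\mathcal{T}$ with $\theta(M)=0$, i.e. $M\in\mathcal{W}^{T}$; combined with the first paragraph this gives the desired equality. The main obstacle I anticipate is the key lemma, and within it the passage through the triangle (\ref{triangle}) to produce the monomorphism $M\hookrightarrow\mathrm{Hom}(T'',M[1])$; everything else is formal bookkeeping with the torsion pair $(\mathcal{T},\mathcal{F})$ and the additivity of $\theta$ on short exact sequences.
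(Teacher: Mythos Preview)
Your argument is correct. The route, however, differs from the paper's in a way worth noting. The paper deduces Theorem~\ref{main} from the more general Theorem~\ref{presiltsemistable} by first identifying $\mathcal{W}^{T}=\mathcal{W}_{T_{\rho}}$ (Lemma~\ref{2wide}) and then working with the \emph{two} torsion pairs $(\mathcal{T}_{T_{\rho}}^{\pm},\mathcal{F}_{T_{\rho}}^{\pm})$ attached to the presilting complex $T_{\rho}$; the crucial strict inequalities (nonzero $M\in\mathcal{T}_{T_{\rho}}^{-}$ has $\theta(M)>0$, nonzero $M\in\mathcal{F}_{T_{\rho}}^{+}$ has $\theta(M)<0$) come from the Euler-form identities via Serre duality (Lemma~\ref{eulerform}, Lemma~\ref{torsilt}). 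You instead stay with the single torsion pair $(\mathcal{T},\mathcal{F})$ attached to $T$ itself---which, by Lemma~\ref{trivial}, coincides with $(\mathcal{T}_{T_{\rho}}^{+},\mathcal{F}_{T_{\rho}}^{+})$---and replace the Serre-duality step by the long exact sequence coming from the silting triangle~(\ref{triangle}); your key lemma is exactly the strict inequality on $\mathcal{F}_{T_{\rho}}^{+}$, proved without the Nakayama functor. The trade-off is that the paper's approach immediately yields the presilting generalization (Theorem~\ref{presiltsemistable}), whereas your argument is tied to the silting case through its essential use of the triangle~(\ref{triangle}); on the other hand, your proof is more self-contained in that it avoids Serre duality entirely. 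One cosmetic remark: the description $\mathcal{T}=\{M:\mathrm{Hom}_{\mathsf{D}^{\mathrm b}}(T,M[1])=0\}$ you cite is Lemma~\ref{torsilt} combined with~(\ref{silttor}), not literally in Subsection~\ref{prelim}.
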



 We prove Theorem \ref{main} in a more general setting. Any basic two-term presilting complex $U$ in ${\mathsf K}^{{\rm b}}(\proj \Lambda)$ gives rise to a wide subcategory of $\Mod \Lambda$ as follows: By \cite[Proposition 2.9]{AIR} (see also \cite[Section 5]{BPP}), there are two torsion pairs
{\setlength\arraycolsep{0.5mm}
\begin{eqnarray*}
 (\mathcal{T}_U^{+},\mathcal{F}_U^{+})&:=&({}^{\perp}{\rm H}^{-1}(\nu U),\Sub {\rm H}^{-1}(\nu U)),\\
 (\mathcal{T}_U^{-},\mathcal{F}_U^{-})&:=&(\Fac {\rm H}^0(U),{\rm H}^0(U)^{\perp})
\end{eqnarray*}}
in $\Mod \Lambda$ such that $\mathcal{T}_U^{+} \supseteq \mathcal{T}_U^{-}$ and $\mathcal{F}_U^{+} \subseteq \mathcal{F}_U^{-}$. Then
\[
 \mathcal{W}_U:=\mathcal{T}_U^{+}\cap\mathcal{F}_U^{-}
\]
is a wide subcategory of $\Mod \Lambda$ (e.g. \cite{DIRRT}), which is equivalent to $\Mod C$ for some explicitly constructed finite dimensional algebra $C$ (see \cite[Theorem 1.4]{J}).

 Our Theorem \ref{main} can be deduced from the following result since $\mathcal{W}^{T}=\mathcal{W}_{T_{\rho}}$ holds for any two-term silting complex $T$ (see Lemma \ref{2wide}).

\begin{theorem}\label{presiltsemistable}
 Let $U$ be a basic two-term presilting complex in ${\mathsf K}^{{\rm b}}(\proj \Lambda)$. We consider an ${\mathbb R}$-linear form $\theta$ defined by
\[
\sum_{X} a_{X} \langle X,-\rangle : K_0(\Mod \Lambda) \otimes_{\mathbb Z} {\mathbb R} \rightarrow {\mathbb R},
\]
where $X$ runs over all indecomposable direct summands of $U$, and $a_{X}$ is an arbitrary positive real number for each $X$. Then $\mathcal{W}_U$ is the $\theta$-semistable subcategory of $\Mod \Lambda$.
\end{theorem}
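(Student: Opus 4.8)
The plan is to translate every condition defining $\mathcal{W}_U$ and every value of $\theta$ into (non)vanishing of suitable $\operatorname{Hom}$-spaces by means of a single Euler-form identity, and then to play the two nested torsion pairs against each other. For an indecomposable summand $X=(X^{-1}\xrightarrow{\,d\,}X^{0})$ of $U$ and any $M\in\Mod\Lambda$, the first step is to prove
\[
\langle X,M\rangle=\dim_{k}\operatorname{Hom}_{\Lambda}({\rm H}^{0}(X),M)-\dim_{k}\operatorname{Hom}_{\Lambda}(M,{\rm H}^{-1}(\nu X)).
\]
Since $[X]=[X^{0}]-[X^{-1}]$ in $K_{0}({\mathsf K}^{{\rm b}}(\proj\Lambda))$ and the $X^{i}$ are projective, $\langle X,M\rangle=\dim\operatorname{Hom}_{\Lambda}(X^{0},M)-\dim\operatorname{Hom}_{\Lambda}(X^{-1},M)$. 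Applying $\operatorname{Hom}_{\Lambda}(-,M)$ to $X^{-1}\xrightarrow{d}X^{0}\to{\rm H}^{0}(X)\to 0$ identifies $\operatorname{Hom}_{\Lambda}({\rm H}^{0}(X),M)$ with $\ker(d^{\ast})$, where $d^{\ast}\colon\operatorname{Hom}_{\Lambda}(X^{0},M)\to\operatorname{Hom}_{\Lambda}(X^{-1},M)$; and the Nakayama duality $\operatorname{Hom}_{\Lambda}(M,\nu P)\cong D\operatorname{Hom}_{\Lambda}(P,M)$ with $D=\operatorname{Hom}_{k}(-,k)$, applied to $0\to{\rm H}^{-1}(\nu X)\to\nu X^{-1}\to\nu X^{0}$, identifies $\operatorname{Hom}_{\Lambda}(M,{\rm H}^{-1}(\nu X))$ with $D\operatorname{coker}(d^{\ast})$. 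Comparing the four dimensions yields the identity. I also record that ${\rm H}^{0}(X)$ is a direct summand of ${\rm H}^{0}(U)=\bigoplus_{X}{\rm H}^{0}(X)$ and ${\rm H}^{-1}(\nu X)$ a direct summand of ${\rm H}^{-1}(\nu U)=\bigoplus_{X}{\rm H}^{-1}(\nu X)$, so $\operatorname{Hom}_{\Lambda}({\rm H}^{0}(U),M)=0$ forces $\operatorname{Hom}_{\Lambda}({\rm H}^{0}(X),M)=0$ for every $X$, and dually for ${\rm H}^{-1}(\nu U)$.

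From the identity I would extract the sign of $\theta$ on the two torsion pairs. If $N\in\mathcal{F}_{U}^{-}={\rm H}^{0}(U)^{\perp}$ then $\operatorname{Hom}_{\Lambda}({\rm H}^{0}(X),N)=0$, so $\langle X,N\rangle=-\dim\operatorname{Hom}_{\Lambda}(N,{\rm H}^{-1}(\nu X))\le 0$ for each $X$, hence $\theta(N)\le 0$; and since each $a_{X}>0$, we have $\theta(N)=0$ \emph{if and only if} $\operatorname{Hom}_{\Lambda}(N,{\rm H}^{-1}(\nu X))=0$ for all $X$, i.e. if and only if $N\in{}^{\perp}{\rm H}^{-1}(\nu U)=\mathcal{T}_{U}^{+}$. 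Dually, if $N\in\mathcal{T}_{U}^{-}=\Fac{\rm H}^{0}(U)$ then, using the nestedness $\mathcal{T}_{U}^{-}\subseteq\mathcal{T}_{U}^{+}$ supplied by the presilting hypothesis, $\operatorname{Hom}_{\Lambda}(N,{\rm H}^{-1}(\nu X))=0$, so $\langle X,N\rangle=\dim\operatorname{Hom}_{\Lambda}({\rm H}^{0}(X),N)\ge 0$ and $\theta(N)\ge 0$. The refinement I will need is that a \emph{nonzero} $N\in\Fac{\rm H}^{0}(U)$ admits a nonzero map from ${\rm H}^{0}(X)$ for some $X$, being a quotient of a finite sum of copies of ${\rm H}^{0}(U)=\bigoplus_{X}{\rm H}^{0}(X)$; hence $\theta(N)\ge a_{X}\dim\operatorname{Hom}_{\Lambda}({\rm H}^{0}(X),N)>0$.

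The inclusion of $\mathcal{W}_{U}$ into the $\theta$-semistable subcategory is then immediate: if $M\in\mathcal{W}_{U}=\mathcal{T}_{U}^{+}\cap\mathcal{F}_{U}^{-}$ both $\operatorname{Hom}$-terms in the identity vanish for every $X$, so $\langle X,M\rangle=0$ and $\theta(M)=0$; and every submodule $L$ of $M$ lies in the torsion-free class $\mathcal{F}_{U}^{-}$, whence $\theta(L)\le 0$. For the reverse inclusion, let $M$ be $\theta$-semistable and take its canonical exact sequence $0\to\mathsf{t}M\to M\to M/\mathsf{t}M\to 0$ for the torsion pair $(\mathcal{T}_{U}^{-},\mathcal{F}_{U}^{-})$. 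Then $\theta(M/\mathsf{t}M)\le 0$ since $M/\mathsf{t}M\in\mathcal{F}_{U}^{-}$, while $\theta(M/\mathsf{t}M)\ge 0$ since $M/\mathsf{t}M$ is a factor module of the semistable $M$; so $\theta(M/\mathsf{t}M)=0$ and therefore $\theta(\mathsf{t}M)=\theta(M)-\theta(M/\mathsf{t}M)=0$. As $\mathsf{t}M\in\Fac{\rm H}^{0}(U)$, the strict positivity above forces $\mathsf{t}M=0$, i.e. $M\in\mathcal{F}_{U}^{-}$; and $M\in\mathcal{F}_{U}^{-}$ with $\theta(M)=0$ forces $M\in\mathcal{T}_{U}^{+}$ by the equality case above. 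Hence $M\in\mathcal{W}_{U}$.

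I expect the only delicate point to be the Euler-form identity of the first step — specifically, getting the direction of the Nakayama-duality isomorphism right so that $\langle X,M\rangle$ decomposes with the two $\operatorname{Hom}$-terms carrying opposite signs. Everything afterward is formal: the presilting hypothesis is used solely through the existence of the two torsion pairs and the nestedness $\mathcal{T}_{U}^{-}\subseteq\mathcal{T}_{U}^{+}$, and the remainder is elementary manipulation with torsion and torsion-free classes together with the positivity of the coefficients $a_{X}$.
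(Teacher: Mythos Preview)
Your proof is correct and follows essentially the same strategy as the paper's: an Euler-form identity (the paper's Lemma~3.1 combined with Lemma~3.2) yields the sign information of Proposition~3.3, and then the two torsion pairs finish the job. The only cosmetic differences are that you package the identity as a single formula $\langle X,M\rangle=\dim\operatorname{Hom}_{\Lambda}({\rm H}^{0}(X),M)-\dim\operatorname{Hom}_{\Lambda}(M,{\rm H}^{-1}(\nu X))$ computed directly via $\ker(d^{\ast})$ and $\operatorname{coker}(d^{\ast})$, whereas the paper splits it through $\operatorname{Hom}_{{\mathsf D}^{\rm b}}(X,M[1])$ and $\operatorname{Hom}_{{\mathsf D}^{\rm b}}(M,\nu X)$; and in the reverse inclusion you use the equality case on $\mathcal{F}_{U}^{-}$ to land in $\mathcal{T}_{U}^{+}$, while the paper instead runs the canonical-sequence argument a second time for the pair $(\mathcal{T}_{U}^{+},\mathcal{F}_{U}^{+})$.
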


 Note that in the context of support $\tau$-tilting modules, Theorem \ref{presiltsemistable} was independently obtained by Br\"{u}stle-Smith-Treffinger \cite{BST} and Speyer-Thomas \cite{ST}.


\medskip\noindent{\bf Notations}.
 Let $\Lambda$ be a finite dimensional algebra over a field $k$, and $\Mod \Lambda$ (resp., $\proj \Lambda$, $\inj \Lambda$) the category of finitely generated right $\Lambda$-modules (resp., projective right $\Lambda$-modules, injective right $\Lambda$-modules). For $M \in \Mod \Lambda$, let $\add M$ (resp., $\Fac M$, $\Sub M$) be the category of all direct summands (resp., factor modules, submodules) of finite direct sums of copies of $M$. We denote by $D$ the $k$-dual ${\rm Hom}_{k}(-,k)$.

 For a full subcategory ${\mathcal S}$ of $\Mod \Lambda$, let
\[
{\mathcal S}^{\perp}:=\{M \in \Mod \Lambda \mid {\rm Hom}_{\Lambda}({\mathcal S},M)=0\},\hspace{3mm}
{}^{\perp}{\mathcal S}:=\{M \in \Mod \Lambda \mid {\rm Hom}_{\Lambda}(M,{\mathcal S})=0\}.
\]

 For an additive (resp., abelian) category ${\mathcal A}$, let ${\mathsf K}^{{\rm b}}({\mathcal A})$ (resp., ${\mathsf D}^{{\rm b}}({\mathcal A})$) be the homotopy (resp., derived) category of bounded complexes over ${\mathcal A}$. We denote by $\nu$ the Nakayama functor $D \Lambda \otimes_{\Lambda} - : {\mathsf K}^{{\rm b}}(\proj \Lambda) \rightarrow {\mathsf K}^{{\rm b}}(\inj \Lambda)$.


\section{Example}

 Before proving our results, we give an example.

 Let $Q$ be the quiver
\[
\begin{xy}
 (0,-2)="1"*{1}, +(13,0)="2"*{2}, +(-6.5,6)="3"*{3}
 \ar@{->}"1"+(2,0);"2"+(-2,0) \ar@{->}"2"+(-1.5,1.5);"3"+(1.5,-1) \ar@{->}"3"+(-1.5,-1);"1"+(1.5,1.5)
\end{xy}
\]
and $I$ be the two-sided ideal of the path algebra $kQ$ generated by all paths of length three. Then $\Lambda := kQ/I$ is a finite dimensional $k$-algebra. The Auslander-Reiten quiver of $\Mod \Lambda$ is the following
\[
\xymatrix@!C=2mm@!R=0.5mm{
 \hspace{-1mm}\mbox{\footnotesize$\renewcommand{\arraystretch}{0.6}\begin{array}{c}\mbox{{\normalsize$P_3$}}\\ \\ 3\\ 1\\ 2\\ \end{array}$}\hspace{-1mm} \ar[rd] && \hspace{-1mm}\mbox{\footnotesize$\renewcommand{\arraystretch}{0.6}\begin{array}{c}\mbox{{\normalsize$P_2$}}\\ \\ 2\\ 3\\ 1\\ \end{array}$}\hspace{-1mm} \ar[rd] && \hspace{-1mm}\mbox{\footnotesize$\renewcommand{\arraystretch}{0.6}\begin{array}{c}\mbox{{\normalsize$P_1$}}\\ \\ 1\\ 2\\ 3\\ \end{array}$}\hspace{-1mm} \ar[rd] && \hspace{-1mm}\mbox{\footnotesize$\renewcommand{\arraystretch}{0.6}\begin{array}{c}\mbox{{\normalsize$P_3$}}\\ \\ 3\\ 1\\ 2\\ \end{array}$}\hspace{-1mm}\\
 & \hspace{-1mm}\mbox{\footnotesize$\renewcommand{\arraystretch}{0.6}\begin{array}{c} 3\\ 1\\ \end{array}$}\hspace{-1mm} \ar[ru] \ar[rd] && \hspace{-1mm}\mbox{\footnotesize$\renewcommand{\arraystretch}{0.6}\begin{array}{c} 2\\ 3\\ \end{array}$}\hspace{-1mm} \ar[ru] \ar[rd] && \hspace{-1mm}\mbox{\footnotesize$\renewcommand{\arraystretch}{0.6}\begin{array}{c} 1\\ 2\\ \end{array}$}\hspace{-1mm} \ar[ru] \ar[rd] &\\
 \hspace{-1mm}\mbox{\footnotesize$\renewcommand{\arraystretch}{0.6}\begin{array}{c} 1\\ \end{array}$}\hspace{-1mm} \ar[ru] \ar@{.}[uu] && \hspace{-1mm}\mbox{\footnotesize$\renewcommand{\arraystretch}{0.6}\begin{array}{c} 3\\ \end{array}$}\hspace{-1mm} \ar[ru] && \hspace{-1mm}\mbox{\footnotesize$\renewcommand{\arraystretch}{0.6}\begin{array}{c} 2\\ \end{array}$}\hspace{-1mm} \ar[ru] &&
\mbox{\footnotesize$\renewcommand{\arraystretch}{0.6}\begin{array}{c} 1\\ \end{array}$} \ar@{.}[uu]}
\]
 Table \ref{taubijex} gives a complete list of two-term silting complexes, support $\tau$-tiling $\Lambda$-modules, functorially finite torsion classes and left finite wide subcategories in $\Mod \Lambda$. The objects in each row correspond to each other under the bijections of Theorem \ref{MSresult}. For $T \in {\rm 2 \mathchar`-silt}\Lambda$, we write the class of indecomposable direct summands of $T$ in $K_0(\proj \Lambda)$. Moreover, indecomposable direct summands $X$ of $T_{\rho}$ and ${\rm H}^0 (X)$ are colored in blue.


\begin{table}[htbp]
  \caption{Example of Theorem \ref{MSresult}}
\begin{itemize}\setlength{\itemsep}{1mm}
  \item ${\rm 2 \mathchar`-silt}\Lambda$ : \mbox{the set of isomorphism classes of basic two-term silting complexes in } ${\mathsf K}^{{\rm b}}(\proj \Lambda)$.
  \item ${\rm s \tau \mathchar`-tilt}\Lambda : \mbox{the set of isomorphism classes of basic support $\tau$-tilting modules in } \Mod \Lambda.$
  \item ${\rm f \mathchar`- tors}\Lambda : \mbox{the set of functorially finite torsion classes in } \Mod \Lambda.$
  \item ${\rm f_{L} \mathchar`-wide}\Lambda : \mbox{the set of left finite wide subcategories of } \Mod \Lambda.$
\end{itemize}
\vspace{3mm}
\begin{center}
\begin{minipage}{0.48\hsize}
  \begin{tabular}{c|c|c|c}
    ${\rm 2 \mathchar`-silt}\Lambda$ & ${\rm s \tau \mathchar`-tilt}\Lambda$ & ${\rm f \mathchar`- tors}\Lambda$ & ${\rm f_{L} \mathchar`-wide}\Lambda$ \\[1pt] \hline \hline
    \begin{xy}
     (0,-3.5)*{P_1,P_2,P_3}
    \end{xy} &
    \begin{xy}
     (0,0)="0"*{{\color{red} \bullet}}, +(4,0)*{\color{red} \bullet}, +(4,0)*{\color{red} \bullet},
     "0"+(2,-3.5)*{\circ}, +(4,0)*{\circ}, +(4,0)*{\circ},
     "0"+(0,-7)*{\circ}, +(4,0)*{\circ}, +(4,0)*{\circ}, +(0,-2.3)*{}
    \end{xy} &
    \begin{xy}
     (0,0)="0"*{{\color{red} \bullet}}, +(4,0)*{\color{red} \bullet}, +(4,0)*{\color{red} \bullet},
     "0"+(2,-3.5)*{{\color{red} \bullet}}, +(4,0)*{{\color{red} \bullet}}, +(4,0)*{{\color{red} \bullet}},
     "0"+(0,-7)*{{\color{red} \bullet}}, +(4,0)*{{\color{red} \bullet}}, +(4,0)*{{\color{red} \bullet}}, +(0,-2.3)*{}
    \end{xy} &
    \begin{xy}
     (0,0)="0"*{{\color{red} \bullet}}, +(4,0)*{\color{red} \bullet}, +(4,0)*{\color{red} \bullet},
     "0"+(2,-3.5)*{{\color{red} \bullet}}, +(4,0)*{{\color{red} \bullet}}, +(4,0)*{{\color{red} \bullet}},
     "0"+(0,-7)*{{\color{red} \bullet}}, +(4,0)*{{\color{red} \bullet}}, +(4,0)*{{\color{red} \bullet}}, +(0,-2.3)*{}
    \end{xy}
\\ \hline
    \begin{xy}
     (0,-1)*{P_1,P_2,}, +(0,-5)*{{\color{blue}P_2-P_3}}
    \end{xy} &
    \begin{xy}
     (0,0)="0"*{\circ}, +(4,0)*{\color{red} \bullet}, +(4,0)*{\color{red} \bullet},
     "0"+(2,-3.5)*{\circ}, +(4,0)*{\circ}, +(4,0)*{\circ},
     "0"+(0,-7)*{\circ}, +(4,0)*{\circ}, +(4,0)*{{\color{blue} \bullet}}, +(0,-2.3)*{}
    \end{xy} &
    \begin{xy}
     (0,0)="0"*{\circ}, +(4,0)*{\color{red} \bullet}, +(4,0)*{\color{red} \bullet},
     "0"+(2,-3.5)*{\circ}, +(4,0)*{{\color{red} \bullet}}, +(4,0)*{{\color{red} \bullet}},
     "0"+(0,-7)*{{\color{red} \bullet}}, +(4,0)*{\circ}, +(4,0)*{{\color{red} \bullet}}, +(0,-2.3)*{}
    \end{xy} &
    \begin{xy}
     (0,0)="0"*{\circ}, +(4,0)*{\color{red} \bullet}, +(4,0)*{\color{red} \bullet},
     "0"+(2,-3.5)*{\circ}, +(4,0)*{{\color{red} \bullet}}, +(4,0)*{\circ},
     "0"+(0,-7)*{{\color{red} \bullet}}, +(4,0)*{\circ}, +(4,0)*{\circ}, +(0,-2.3)*{}
    \end{xy}
\\ \hline
    \begin{xy}
     (0,-1)*{P_1,P_3,}, +(0,-5)*{{\color{blue}P_1-P_2}}
    \end{xy} &
    \begin{xy}
     (0,0)="0"*{{\color{red} \bullet}}, +(4,0)*{\circ}, +(4,0)*{\color{red} \bullet},
     "0"+(2,-3.5)*{\circ}, +(4,0)*{\circ}, +(4,0)*{\circ},
     "0"+(0,-7)*{\color{blue} \bullet}, +(4,0)*{\circ}, +(4,0)*{\circ}, +(0,-2.3)*{}
    \end{xy} &
    \begin{xy}
     (0,0)="0"*{{\color{red} \bullet}}, +(4,0)*{\circ}, +(4,0)*{\color{red} \bullet},
     "0"+(2,-3.5)*{{\color{red} \bullet}}, +(4,0)*{\circ}, +(4,0)*{{\color{red} \bullet}},
     "0"+(0,-7)*{{\color{red} \bullet}}, +(4,0)*{{\color{red} \bullet}}, +(4,0)*{\circ}, +(0,-2.3)*{}
    \end{xy} &
    \begin{xy}
     (0,0)="0"*{{\color{red} \bullet}}, +(4,0)*{\circ}, +(4,0)*{\color{red} \bullet},
     "0"+(2,-3.5)*{\circ}, +(4,0)*{\circ}, +(4,0)*{{\color{red} \bullet}},
     "0"+(0,-7)*{\circ}, +(4,0)*{{\color{red} \bullet}}, +(4,0)*{\circ}, +(0,-2.3)*{}
    \end{xy}
\\ \hline
    \begin{xy}
     (0,-1)*{P_2,P_3,}, +(0,-5)*{{\color{blue}P_3-P_1}}
    \end{xy} &
    \begin{xy}
     (0,0)="0"*{{\color{red} \bullet}}, +(4,0)*{\color{red} \bullet}, +(4,0)*{\circ},
     "0"+(2,-3.5)*{\circ}, +(4,0)*{\circ}, +(4,0)*{\circ},
     "0"+(0,-7)*{\circ}, +(4,0)*{\color{blue} \bullet}, +(4,0)*{\circ}, +(0,-2.3)*{}
    \end{xy} &
    \begin{xy}
     (0,0)="0"*{{\color{red} \bullet}}, +(4,0)*{\color{red} \bullet}, +(4,0)*{\circ},
     "0"+(2,-3.5)*{{\color{red} \bullet}}, +(4,0)*{{\color{red} \bullet}}, +(4,0)*{\circ},
     "0"+(0,-7)*{\circ}, +(4,0)*{{\color{red} \bullet}}, +(4,0)*{{\color{red} \bullet}}, +(0,-2.3)*{}
    \end{xy} &
    \begin{xy}
     (0,0)="0"*{{\color{red} \bullet}}, +(4,0)*{\color{red} \bullet}, +(4,0)*{\circ},
     "0"+(2,-3.5)*{{\color{red} \bullet}}, +(4,0)*{\circ}, +(4,0)*{\circ},
     "0"+(0,-7)*{\circ}, +(4,0)*{\circ}, +(4,0)*{{\color{red} \bullet}}, +(0,-2.3)*{}
    \end{xy}
\\ \hline
    \begin{xy}
     (0,-1)*{P_1,{\color{blue}P_1-P_3},}, +(0,-5)*{P_2-P_3}
    \end{xy} &
    \begin{xy}
     (0,0)="0"*{\circ}, +(4,0)*{\circ}, +(4,0)*{\color{red} \bullet},
     "0"+(2,-3.5)*{\circ}, +(4,0)*{\circ}, +(4,0)*{{\color{blue} \bullet}},
     "0"+(0,-7)*{\circ}, +(4,0)*{\circ}, +(4,0)*{\color{red} \bullet}, +(0,-2.3)*{}
    \end{xy} &
    \begin{xy}
     (0,0)="0"*{\circ}, +(4,0)*{\circ}, +(4,0)*{\color{red} \bullet},
     "0"+(2,-3.5)*{\circ}, +(4,0)*{\circ}, +(4,0)*{{\color{red} \bullet}},
     "0"+(0,-7)*{{\color{red} \bullet}}, +(4,0)*{\circ}, +(4,0)*{{\color{red} \bullet}}, +(0,-2.3)*{}
    \end{xy} &
    \begin{xy}
     (0,0)="0"*{\circ}, +(4,0)*{\circ}, +(4,0)*{\color{red} \bullet},
     "0"+(2,-3.5)*{\circ}, +(4,0)*{\circ}, +(4,0)*{\circ},
     "0"+(0,-7)*{\circ}, +(4,0)*{\circ}, +(4,0)*{\color{red} \bullet}, +(0,-2.3)*{}
    \end{xy}
\\ \hline
    \begin{xy}
     (0,-1)*{P_2,{\color{blue}P_2-P_1},}, +(0,-5)*{{\color{blue}P_2-P_3}}
    \end{xy} &
    \begin{xy}
     (0,0)="0"*{\circ}, +(4,0)*{\color{red} \bullet}, +(4,0)*{\circ},
     "0"+(2,-3.5)*{\circ}, +(4,0)*{{\color{blue} \bullet}}, +(4,0)*{\circ},
     "0"+(0,-7)*{\circ}, +(4,0)*{\circ}, +(4,0)*{\color{blue} \bullet}, +(0,-2.3)*{}
    \end{xy} &
    \begin{xy}
     (0,0)="0"*{\circ}, +(4,0)*{\color{red} \bullet}, +(4,0)*{\circ},
     "0"+(2,-3.5)*{\circ}, +(4,0)*{{\color{red} \bullet}}, +(4,0)*{\circ},
     "0"+(0,-7)*{\circ}, +(4,0)*{\circ}, +(4,0)*{\color{red} \bullet}, +(0,-2.3)*{}
    \end{xy} &
    \begin{xy}
     (0,0)="0"*{\circ}, +(4,0)*{\color{red} \bullet}, +(4,0)*{\circ},
     "0"+(2,-3.5)*{\circ}, +(4,0)*{\circ}, +(4,0)*{\circ},
     "0"+(0,-7)*{\circ}, +(4,0)*{\circ}, +(4,0)*{\circ}, +(0,-2.3)*{}
    \end{xy}
\\ \hline
    \begin{xy}
     (0,-1)*{P_3,{\color{blue}P_3-P_2},}, +(0,-5)*{P_1-P_2}
    \end{xy} &
    \begin{xy}
     (0,0)="0"*{{\color{red} \bullet}}, +(4,0)*{\circ}, +(4,0)*{\circ},
     "0"+(2,-3.5)*{\color{blue} \bullet}, +(4,0)*{\circ}, +(4,0)*{\circ},
     "0"+(0,-7)*{\color{red} \bullet}, +(4,0)*{\circ}, +(4,0)*{\circ}, +(0,-2.3)*{}
    \end{xy} &
    \begin{xy}
     (0,0)="0"*{{\color{red} \bullet}}, +(4,0)*{\circ}, +(4,0)*{\circ},
     "0"+(2,-3.5)*{\color{red} \bullet}, +(4,0)*{\circ}, +(4,0)*{\circ},
     "0"+(0,-7)*{\color{red} \bullet}, +(4,0)*{{\color{red} \bullet}}, +(4,0)*{\circ}, +(0,-2.3)*{}
    \end{xy} &
    \begin{xy}
     (0,0)="0"*{{\color{red} \bullet}}, +(4,0)*{\circ}, +(4,0)*{\circ},
     "0"+(2,-3.5)*{\circ}, +(4,0)*{\circ}, +(4,0)*{\circ},
     "0"+(0,-7)*{\color{red} \bullet}, +(4,0)*{\circ}, +(4,0)*{\circ}, +(0,-2.3)*{}
    \end{xy}
\\ \hline
    \begin{xy}
     (0,-1)*{P_1,{\color{blue}P_1-P_3},}, +(0,-5)*{{\color{blue}P_1-P_2}}
    \end{xy} &
    \begin{xy}
     (0,0)="0"*{\circ}, +(4,0)*{\circ}, +(4,0)*{\color{red} \bullet},
     "0"+(2,-3.5)*{\circ}, +(4,0)*{\circ}, +(4,0)*{\color{blue} \bullet},
     "0"+(0,-7)*{{\color{blue} \bullet}}, +(4,0)*{\circ}, +(4,0)*{\circ}, +(0,-2.3)*{}
    \end{xy} &
    \begin{xy}
     (0,0)="0"*{\circ}, +(4,0)*{\circ}, +(4,0)*{\color{red} \bullet},
     "0"+(2,-3.5)*{\circ}, +(4,0)*{\circ}, +(4,0)*{\color{red} \bullet},
     "0"+(0,-7)*{{\color{red} \bullet}}, +(4,0)*{\circ}, +(4,0)*{\circ}, +(0,-2.3)*{}
    \end{xy} &
    \begin{xy}
     (0,0)="0"*{\circ}, +(4,0)*{\circ}, +(4,0)*{\color{red} \bullet},
     "0"+(2,-3.5)*{\circ}, +(4,0)*{\circ}, +(4,0)*{\circ},
     "0"+(0,-7)*{\circ}, +(4,0)*{\circ}, +(4,0)*{\circ}, +(0,-2.3)*{}
    \end{xy}
\\ \hline
    \begin{xy}
     (0,-1)*{P_2,{\color{blue}P_2-P_1},}, +(0,-5)*{P_3-P_1}
    \end{xy} &
    \begin{xy}
     (0,0)="0"*{\circ}, +(4,0)*{\color{red} \bullet}, +(4,0)*{\circ},
     "0"+(2,-3.5)*{\circ}, +(4,0)*{{\color{blue} \bullet}}, +(4,0)*{\circ},
     "0"+(0,-7)*{\circ}, +(4,0)*{\color{red} \bullet}, +(4,0)*{\circ}, +(0,-2.3)*{}
    \end{xy} &
    \begin{xy}
     (0,0)="0"*{\circ}, +(4,0)*{\color{red} \bullet}, +(4,0)*{\circ},
     "0"+(2,-3.5)*{\circ}, +(4,0)*{{\color{red} \bullet}}, +(4,0)*{\circ},
     "0"+(0,-7)*{\circ}, +(4,0)*{\color{red} \bullet}, +(4,0)*{\color{red} \bullet}, +(0,-2.3)*{}
    \end{xy} &
    \begin{xy}
     (0,0)="0"*{\circ}, +(4,0)*{\color{red} \bullet}, +(4,0)*{\circ},
     "0"+(2,-3.5)*{\circ}, +(4,0)*{\circ}, +(4,0)*{\circ},
     "0"+(0,-7)*{\circ}, +(4,0)*{\color{red} \bullet}, +(4,0)*{\circ}, +(0,-2.3)*{}
    \end{xy}
\\ \hline
    \begin{xy}
     (0,-1)*{P_3,{\color{blue}P_3-P_2},}, +(0,-5)*{{\color{blue}P_3-P_1}}
    \end{xy} &
    \begin{xy}
     (0,0)="0"*{{\color{red} \bullet}}, +(4,0)*{\circ}, +(4,0)*{\circ},
     "0"+(2,-3.5)*{\color{blue} \bullet}, +(4,0)*{\circ}, +(4,0)*{\circ},
     "0"+(0,-7)*{\circ}, +(4,0)*{\color{blue} \bullet}, +(4,0)*{\circ}, +(0,-2.3)*{}
    \end{xy} &
    \begin{xy}
     (0,0)="0"*{{\color{red} \bullet}}, +(4,0)*{\circ}, +(4,0)*{\circ},
     "0"+(2,-3.5)*{\color{red} \bullet}, +(4,0)*{\circ}, +(4,0)*{\circ},
     "0"+(0,-7)*{\circ}, +(4,0)*{{\color{red} \bullet}}, +(4,0)*{\circ}, +(0,-2.3)*{}
    \end{xy} &
    \begin{xy}
     (0,0)="0"*{{\color{red} \bullet}}, +(4,0)*{\circ}, +(4,0)*{\circ},
     "0"+(2,-3.5)*{\circ}, +(4,0)*{\circ}, +(4,0)*{\circ},
     "0"+(0,-7)*{\circ}, +(4,0)*{\circ}, +(4,0)*{\circ}, +(0,-2.3)*{}
    \end{xy}
  \end{tabular}
\end{minipage}
\begin{minipage}{0.5\hsize}
  \begin{tabular}{c|c|c|c}
    ${\rm 2 \mathchar`-silt}\Lambda$ & ${\rm s \tau \mathchar`-tilt}\Lambda$ & ${\rm f \mathchar`- tors}\Lambda$ & ${\rm f_{L} \mathchar`-wide}\Lambda$ \\[1pt] \hline \hline
    \begin{xy}
     (0,-1)*{P_1-P_3,}, +(0,-5)*{P_2-P_3,{\color{blue}-P_3}}
    \end{xy} &
    \begin{xy}
     (0,0)="0"*{\circ}, +(4,0)*{\circ}, +(4,0)*{\circ},
     "0"+(2,-3.5)*{\circ}, +(4,0)*{\circ}, +(4,0)*{{\color{red} \bullet}},
     "0"+(0,-7)*{\circ}, +(4,0)*{\circ}, +(4,0)*{\color{red} \bullet}, +(0,-2.3)*{}
    \end{xy} &
    \begin{xy}
     (0,0)="0"*{\circ}, +(4,0)*{\circ}, +(4,0)*{\circ},
     "0"+(2,-3.5)*{\circ}, +(4,0)*{\circ}, +(4,0)*{{\color{red} \bullet}},
     "0"+(0,-7)*{{\color{red} \bullet}}, +(4,0)*{\circ}, +(4,0)*{{\color{red} \bullet}}, +(0,-2.3)*{}
    \end{xy} &
    \begin{xy}
     (0,0)="0"*{\circ}, +(4,0)*{\circ}, +(4,0)*{\circ},
     "0"+(2,-3.5)*{\circ}, +(4,0)*{\circ}, +(4,0)*{{\color{red} \bullet}},
     "0"+(0,-7)*{{\color{red} \bullet}}, +(4,0)*{\circ}, +(4,0)*{{\color{red} \bullet}}, +(0,-2.3)*{}
    \end{xy}
\\ \hline
    \begin{xy}
     (0,-1)*{P_2-P_1,}, +(0,-5)*{{\color{blue}P_2-P_3},{\color{blue}-P_1}}
    \end{xy} &
    \begin{xy}
     (0,0)="0"*{\circ}, +(4,0)*{\circ}, +(4,0)*{\circ},
     "0"+(2,-3.5)*{\circ}, +(4,0)*{{\color{red} \bullet}}, +(4,0)*{\circ},
     "0"+(0,-7)*{\circ}, +(4,0)*{\circ}, +(4,0)*{\color{blue} \bullet}, +(0,-2.3)*{}
    \end{xy} &
    \begin{xy}
     (0,0)="0"*{\circ}, +(4,0)*{\circ}, +(4,0)*{\circ},
     "0"+(2,-3.5)*{\circ}, +(4,0)*{{\color{red} \bullet}}, +(4,0)*{\circ},
     "0"+(0,-7)*{\circ}, +(4,0)*{\circ}, +(4,0)*{\color{red} \bullet}, +(0,-2.3)*{}
    \end{xy} &
    \begin{xy}
     (0,0)="0"*{\circ}, +(4,0)*{\circ}, +(4,0)*{\circ},
     "0"+(2,-3.5)*{\circ}, +(4,0)*{{\color{red} \bullet}}, +(4,0)*{\circ},
     "0"+(0,-7)*{\circ}, +(4,0)*{\circ}, +(4,0)*{\circ}, +(0,-2.3)*{}
    \end{xy}
\\ \hline
    \begin{xy}
     (0,-1)*{P_3-P_2,}, +(0,-5)*{P_1-P_2,{\color{blue}-P_2}}
    \end{xy} &
    \begin{xy}
     (0,0)="0"*{\circ}, +(4,0)*{\circ}, +(4,0)*{\circ},
     "0"+(2,-3.5)*{\color{red} \bullet}, +(4,0)*{\circ}, +(4,0)*{\circ},
     "0"+(0,-7)*{\color{red} \bullet}, +(4,0)*{\circ}, +(4,0)*{\circ}, +(0,-2.3)*{}
    \end{xy} &
    \begin{xy}
     (0,0)="0"*{\circ}, +(4,0)*{\circ}, +(4,0)*{\circ},
     "0"+(2,-3.5)*{\color{red} \bullet}, +(4,0)*{\circ}, +(4,0)*{\circ},
     "0"+(0,-7)*{\color{red} \bullet}, +(4,0)*{{\color{red} \bullet}}, +(4,0)*{\circ}, +(0,-2.3)*{}
    \end{xy} &
    \begin{xy}
     (0,0)="0"*{\circ}, +(4,0)*{\circ}, +(4,0)*{\circ},
     "0"+(2,-3.5)*{\color{red} \bullet}, +(4,0)*{\circ}, +(4,0)*{\circ},
     "0"+(0,-7)*{\color{red} \bullet}, +(4,0)*{{\color{red} \bullet}}, +(4,0)*{\circ}, +(0,-2.3)*{}
    \end{xy}
\\ \hline
    \begin{xy}
     (0,-1)*{P_1-P_3,}, +(0,-5)*{{\color{blue}P_1-P_2},{\color{blue}-P_3}}
    \end{xy} &
    \begin{xy}
     (0,0)="0"*{\circ}, +(4,0)*{\circ}, +(4,0)*{\circ},
     "0"+(2,-3.5)*{\circ}, +(4,0)*{\circ}, +(4,0)*{\color{red} \bullet},
     "0"+(0,-7)*{{\color{blue} \bullet}}, +(4,0)*{\circ}, +(4,0)*{\circ}, +(0,-2.3)*{}
    \end{xy} &
    \begin{xy}
     (0,0)="0"*{\circ}, +(4,0)*{\circ}, +(4,0)*{\circ},
     "0"+(2,-3.5)*{\circ}, +(4,0)*{\circ}, +(4,0)*{\color{red} \bullet},
     "0"+(0,-7)*{{\color{red} \bullet}}, +(4,0)*{\circ}, +(4,0)*{\circ}, +(0,-2.3)*{}
    \end{xy} &
    \begin{xy}
     (0,0)="0"*{\circ}, +(4,0)*{\circ}, +(4,0)*{\circ},
     "0"+(2,-3.5)*{\circ}, +(4,0)*{\circ}, +(4,0)*{\color{red} \bullet},
     "0"+(0,-7)*{\circ}, +(4,0)*{\circ}, +(4,0)*{\circ}, +(0,-2.3)*{}
    \end{xy}
\\ \hline
    \begin{xy}
     (0,-1)*{P_2-P_1,}, +(0,-5)*{P_3-P_1,{\color{blue}-P_1}}
    \end{xy} &
    \begin{xy}
     (0,0)="0"*{\circ}, +(4,0)*{\circ}, +(4,0)*{\circ},
     "0"+(2,-3.5)*{\circ}, +(4,0)*{{\color{red} \bullet}}, +(4,0)*{\circ},
     "0"+(0,-7)*{\circ}, +(4,0)*{\color{red} \bullet}, +(4,0)*{\circ}, +(0,-2.3)*{}
    \end{xy} &
    \begin{xy}
     (0,0)="0"*{\circ}, +(4,0)*{\circ}, +(4,0)*{\circ},
     "0"+(2,-3.5)*{\circ}, +(4,0)*{{\color{red} \bullet}}, +(4,0)*{\circ},
     "0"+(0,-7)*{\circ}, +(4,0)*{\color{red} \bullet}, +(4,0)*{\color{red} \bullet}, +(0,-2.3)*{}
    \end{xy} &
    \begin{xy}
     (0,0)="0"*{\circ}, +(4,0)*{\circ}, +(4,0)*{\circ},
     "0"+(2,-3.5)*{\circ}, +(4,0)*{{\color{red} \bullet}}, +(4,0)*{\circ},
     "0"+(0,-7)*{\circ}, +(4,0)*{\color{red} \bullet}, +(4,0)*{\color{red} \bullet}, +(0,-2.3)*{}
    \end{xy}
\\ \hline
    \begin{xy}
     (0,-1)*{P_3-P_2,}, +(0,-5)*{{\color{blue}P_3-P_1},{\color{blue}-P_2}}
    \end{xy} &
    \begin{xy}
     (0,0)="0"*{\circ}, +(4,0)*{\circ}, +(4,0)*{\circ},
     "0"+(2,-3.5)*{\color{red} \bullet}, +(4,0)*{\circ}, +(4,0)*{\circ},
     "0"+(0,-7)*{\circ}, +(4,0)*{\color{blue} \bullet}, +(4,0)*{\circ}, +(0,-2.3)*{}
    \end{xy} &
    \begin{xy}
     (0,0)="0"*{\circ}, +(4,0)*{\circ}, +(4,0)*{\circ},
     "0"+(2,-3.5)*{\color{red} \bullet}, +(4,0)*{\circ}, +(4,0)*{\circ},
     "0"+(0,-7)*{\circ}, +(4,0)*{{\color{red} \bullet}}, +(4,0)*{\circ}, +(0,-2.3)*{}
    \end{xy} &
    \begin{xy}
     (0,0)="0"*{\circ}, +(4,0)*{\circ}, +(4,0)*{\circ},
     "0"+(2,-3.5)*{\color{red} \bullet}, +(4,0)*{\circ}, +(4,0)*{\circ},
     "0"+(0,-7)*{\circ}, +(4,0)*{\circ}, +(4,0)*{\circ}, +(0,-2.3)*{}
    \end{xy}
\\ \hline
    \begin{xy}
     (0,-1)*{P_2-P_3,}, +(0,-5)*{{\color{blue}-P_1},{\color{blue}-P_3}}
    \end{xy} &
    \begin{xy}
     (0,0)="0"*{\circ}, +(4,0)*{\circ}, +(4,0)*{\circ},
     "0"+(2,-3.5)*{\circ}, +(4,0)*{\circ}, +(4,0)*{\circ},
     "0"+(0,-7)*{\circ}, +(4,0)*{\circ}, +(4,0)*{\color{red} \bullet}, +(0,-2.3)*{}
    \end{xy} &
    \begin{xy}
     (0,0)="0"*{\circ}, +(4,0)*{\circ}, +(4,0)*{\circ},
     "0"+(2,-3.5)*{\circ}, +(4,0)*{\circ}, +(4,0)*{\circ},
     "0"+(0,-7)*{\circ}, +(4,0)*{\circ}, +(4,0)*{\color{red} \bullet}, +(0,-2.3)*{}
    \end{xy} &
    \begin{xy}
     (0,0)="0"*{\circ}, +(4,0)*{\circ}, +(4,0)*{\circ},
     "0"+(2,-3.5)*{\circ}, +(4,0)*{\circ}, +(4,0)*{\circ},
     "0"+(0,-7)*{\circ}, +(4,0)*{\circ}, +(4,0)*{\color{red} \bullet}, +(0,-2.3)*{}
    \end{xy}
\\ \hline
    \begin{xy}
     (0,-1)*{P_1-P_2,}, +(0,-5)*{{\color{blue}-P_2},{\color{blue}-P_3}}
    \end{xy} &
    \begin{xy}
     (0,0)="0"*{\circ}, +(4,0)*{\circ}, +(4,0)*{\circ},
     "0"+(2,-3.5)*{\circ}, +(4,0)*{\circ}, +(4,0)*{\circ},
     "0"+(0,-7)*{\color{red} \bullet}, +(4,0)*{\circ}, +(4,0)*{\circ}, +(0,-2.3)*{}
    \end{xy} &
    \begin{xy}
     (0,0)="0"*{\circ}, +(4,0)*{\circ}, +(4,0)*{\circ},
     "0"+(2,-3.5)*{\circ}, +(4,0)*{\circ}, +(4,0)*{\circ},
     "0"+(0,-7)*{\color{red} \bullet}, +(4,0)*{\circ}, +(4,0)*{\circ}, +(0,-2.3)*{}
    \end{xy} &
    \begin{xy}
     (0,0)="0"*{\circ}, +(4,0)*{\circ}, +(4,0)*{\circ},
     "0"+(2,-3.5)*{\circ}, +(4,0)*{\circ}, +(4,0)*{\circ},
     "0"+(0,-7)*{\color{red} \bullet}, +(4,0)*{\circ}, +(4,0)*{\circ}, +(0,-2.3)*{}
    \end{xy}
\\ \hline
    \begin{xy}
     (0,-1)*{P_3-P_1,}, +(0,-5)*{{\color{blue}-P_1},{\color{blue}-P_2}}
    \end{xy} &
    \begin{xy}
     (0,0)="0"*{\circ}, +(4,0)*{\circ}, +(4,0)*{\circ},
     "0"+(2,-3.5)*{\circ}, +(4,0)*{\circ}, +(4,0)*{\circ},
     "0"+(0,-7)*{\circ}, +(4,0)*{\color{red} \bullet}, +(4,0)*{\circ}, +(0,-2.3)*{}
    \end{xy} &
    \begin{xy}
     (0,0)="0"*{\circ}, +(4,0)*{\circ}, +(4,0)*{\circ},
     "0"+(2,-3.5)*{\circ}, +(4,0)*{\circ}, +(4,0)*{\circ},
     "0"+(0,-7)*{\circ}, +(4,0)*{\color{red} \bullet}, +(4,0)*{\circ}, +(0,-2.3)*{}
    \end{xy} &
    \begin{xy}
     (0,0)="0"*{\circ}, +(4,0)*{\circ}, +(4,0)*{\circ},
     "0"+(2,-3.5)*{\circ}, +(4,0)*{\circ}, +(4,0)*{\circ},
     "0"+(0,-7)*{\circ}, +(4,0)*{\color{red} \bullet}, +(4,0)*{\circ}, +(0,-2.3)*{}
    \end{xy}
\\ \hline
    \begin{xy}
     (0,-1)*{{\color{blue}-P_1},}, +(0,-5)*{{\color{blue}-P_2},{\color{blue}-P_3}}
    \end{xy} &
    \begin{xy}
     (0,0)="0"*{\circ}, +(4,0)*{\circ}, +(4,0)*{\circ},
     "0"+(2,-3.5)*{\circ}, +(4,0)*{\circ}, +(4,0)*{\circ},
     "0"+(0,-7)*{\circ}, +(4,0)*{\circ}, +(4,0)*{\circ}, +(0,-2.3)*{}
    \end{xy} &
    \begin{xy}
     (0,0)="0"*{\circ}, +(4,0)*{\circ}, +(4,0)*{\circ},
     "0"+(2,-3.5)*{\circ}, +(4,0)*{\circ}, +(4,0)*{\circ},
     "0"+(0,-7)*{\circ}, +(4,0)*{\circ}, +(4,0)*{\circ}, +(0,-2.3)*{}
    \end{xy} &
    \begin{xy}
     (0,0)="0"*{\circ}, +(4,0)*{\circ}, +(4,0)*{\circ},
     "0"+(2,-3.5)*{\circ}, +(4,0)*{\circ}, +(4,0)*{\circ},
     "0"+(0,-7)*{\circ}, +(4,0)*{\circ}, +(4,0)*{\circ}, +(0,-2.3)*{}
    \end{xy}
  \end{tabular}
\end{minipage}
\end{center}\vspace{10mm}
  \label{taubijex}
\end{table}


 For a basic two-term presilting complex $U=U_1 \oplus \ldots \oplus U_m$ with indecomposable direct summands $U_i$ in ${\mathsf K}^{{\rm b}}(\proj \Lambda)$, we consider the cone
\[
 C(U):=\Bigl\{\sum_{i=1}^{m}a_i [U_i] \mid a_i>0\ \ (1 \le i \le m)\Bigr\} \subseteq K_0(\proj \Lambda)\otimes_{\mathbb Z} {\mathbb R}.
\]
 Since $\Lambda$ is $\tau$-tilting finite, we have a decomposition \cite{DIJ}
\[
 K_0(\proj \Lambda)\otimes_{\mathbb Z} {\mathbb R}=\underset{U}{\bigsqcup}\hspace{1mm}C(U),
\]
where $U$ runs over isomorphism classes of basic two-term presilting complexes in ${\mathsf K}^{{\rm b}}(\proj \Lambda)$ (see Figure \ref{chamber}). By Theorem \ref{presiltsemistable}, any $\theta$ in the cone $C(U)$ gives rise to the wide/semistable subcategory $\mathcal{W}_U$ of $\Mod \Lambda$.

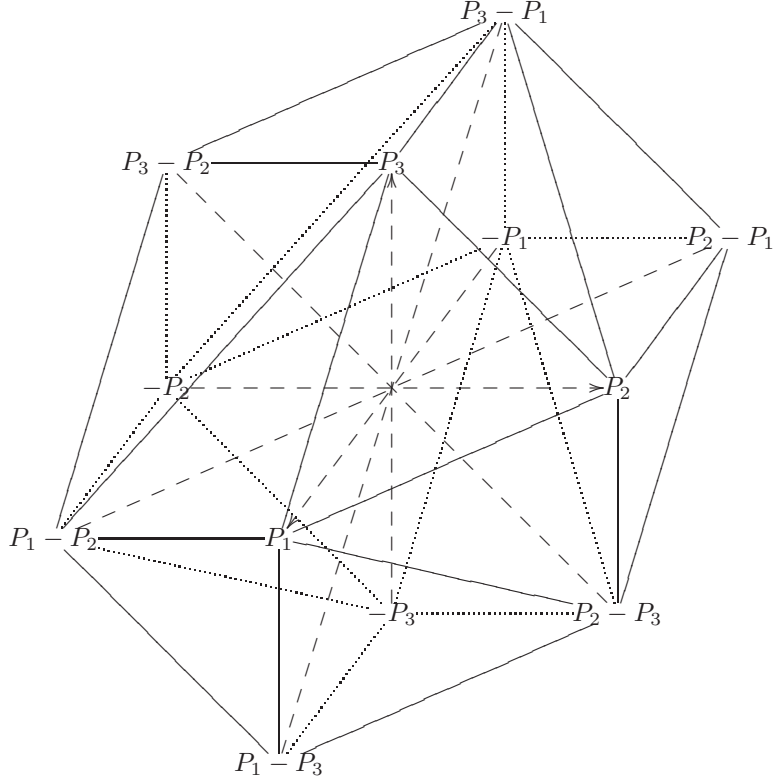
\begin{figure}[htbp]
  \caption{The decomposition of $K_0(\proj \Lambda)\otimes_{\mathbb Z} {\mathbb R}$}\label{chamber}
\[
\begin{xy}
   (0,0)="0", +(-15,-20)*{P_1}="1", +(30,40)*{-P_1}="-1", "0"+(30,0)*{P_2}="2", +(-60,0)*{-P_2}="-2", "0"+(0,30)*{P_3}="3", +(0,-60)*{-P_3}="-3", "1"+(30,0)="12", +(-60,0)*{P_1-P_2}="1-2", "-1"+(30,0)*{P_2-P_1}="2-1", +(-60,0)="-1-2", "1"+(0,30)="13", +(0,-60)*{P_1-P_3}="1-3", "-1"+(0,30)*{P_3-P_1}="3-1", +(0,-60)="-1-3", "2"+(0,30)="23", +(0,-60)*{P_2-P_3}="2-3", "-2"+(0,30)*{P_3-P_2}="3-2", +(0,-60)="-2-3"
   \ar@{-->}"-1";"1" \ar@{-->}"-2";"2" \ar@{-->}"-3";"3" \ar@{--}"1-2";"2-1" \ar@{--}"1-3";"3-1" \ar@{--}"2-3";"3-2"
   \ar@{-}"1";"2" \ar@{-}"2";"3" \ar@{-}"3";"1" \ar@{-}"1";"2-3" \ar@{-}"2";"2-3" \ar@{-}"1";"1-3" \ar@{-}"2-3";"1-3" \ar@{-}"1";"1-2" \ar@{-}"3";"1-2" \ar@{-}"3";"3-2" \ar@{-}"1-2";"3-2" \ar@{-}"1-2";"1-3" \ar@{-}"3";"3-1" \ar@{-}"2";"3-1" \ar@{-}"2-1";"3-1" \ar@{-}"2";"2-1" \ar@{-}"2-1";"2-3" \ar@{-}"3-1";"3-2"
   \ar@{.}"-3";"2-3" \ar@{.}"-3";"1-3" \ar@{.}"-3";"1-2" \ar@{.}"-2";"-3" \ar@{.}"-2";"1-2" \ar@{.}"-2";"3-2" \ar@{.}"-1";"3-1" \ar@{.}"-1";"-2" \ar@{.}"-2";"3-1" \ar@{.}"-3";"-1" \ar@{.}"-1";"2-1" \ar@{.}"-1";"2-3"
\end{xy}
\]
\end{figure}


 (1) We consider the case $U=U_1 \oplus U_2$, where
\[
   U_1=(P_3\rightarrow P_2),\hspace{2mm}
   U_2=(P_1\rightarrow 0).
\]
 Then the cone $C(U)=\{a_{1}(P_2-P_3)+a_{2}(-P_1) \mid a_{1}>0, a_{2}>0\}$ gives rise to the wide/semistable subcategory
\[
 \mathcal{T}_U^{+}\hspace{3mm}
    \begin{xy}
     (0,3)="0"*{\circ}, +(4,0)*{\circ}, +(4,0)*{\circ},
     "0"+(2,-3.5)*{\circ}, +(4,0)*{\color{red} \bullet}, +(4,0)*{\circ},
     "0"+(0,-7)*{\circ}, +(4,0)*{\circ}, +(4,0)*{\color{red} \bullet}, +(0,-2.3)*{}
    \end{xy}
\hspace{2mm} \cap \hspace{2mm} \mathcal{F}_U^{-}\hspace{3mm}
    \begin{xy}
     (0,3)="0"*{\circ}, +(4,0)*{\color{red} \bullet}, +(4,0)*{\color{red} \bullet},
     "0"+(2,-3.5)*{\color{red} \bullet}, +(4,0)*{\color{red} \bullet}, +(4,0)*{\circ},
     "0"+(0,-7)*{\color{red} \bullet}, +(4,0)*{\color{red} \bullet}, +(4,0)*{\circ}, +(0,-2.3)*{}
    \end{xy}
\hspace{2mm} = \hspace{2mm} \mathcal{W}_U\hspace{3mm}
    \begin{xy}
     (0,3)="0"*{\circ}, +(4,0)*{\circ}, +(4,0)*{\circ},
     "0"+(2,-3.5)*{\circ}, +(4,0)*{\color{red} \bullet}, +(4,0)*{\circ},
     "0"+(0,-7)*{\circ}, +(4,0)*{\circ}, +(4,0)*{\circ}, +(0,-2.3)*{}
    \end{xy}
\hspace{2mm} = \hspace{2mm}
\add \Bigl(\hspace{-1mm}\mbox{\footnotesize$\renewcommand{\arraystretch}{0.6}\begin{array}{c} 2\\ 3\\ \end{array}$}\hspace{-1mm}\Bigr)
\]
 of $\Mod \Lambda$. On the other hand, let
\[
\theta=a_{U_1}\langle U_1,-\rangle + a_{U_2}\langle U_2,-\rangle : K_0(\Mod \Lambda) \otimes_{\mathbb Z} {\mathbb R} \rightarrow {\mathbb R}
\]
be an ${\mathbb R}$-linear form, where $a_{U_1}>0$ and $a_{U_2}>0$. We can calculate the values of $\theta$ for indecomposable $\Lambda$-modules as follows:
\[
\xymatrix@!C=2mm@!R=0.5mm{
 -a_{U_2} \ar[rd] && -a_{U_2} \ar[rd] && -a_{U_2} \ar[rd] && -a_{U_2}\\
 & -a_{U_1}-a_{U_2} \ar[ru] \ar[rd] && 0 \ar[ru] \ar[rd] && a_{U_1}-a_{U_2} \ar[ru] \ar[rd] &\\
 -a_{U_2} \ar[ru] && -a_{U_1} \ar[ru] && a_{U_1} \ar[ru] && -a_{U_2}}
\]
 Then the $\theta$-semistable subcategory of $\Mod \Lambda$ is $\add \Bigl(\hspace{-1mm}\mbox{\footnotesize$\renewcommand{\arraystretch}{0.6}\begin{array}{c} 2\\ 3\\ \end{array}$}\hspace{-1mm}\Bigr)$. Thus $\mathcal{W}_U=\add \Bigl(\hspace{-1mm}\mbox{\footnotesize$\renewcommand{\arraystretch}{0.6}\begin{array}{c} 2\\ 3\\ \end{array}$}\hspace{-1mm}\Bigr)$ is wide and semistable.


 (2) Let $T=U \oplus U_3 \in {\rm 2 \mathchar`-silt}\Lambda$, where
\[
   U_3=(P_1\rightarrow P_2).
\]
 Then there is a triangle
\[
 \Lambda \rightarrow U_3 \oplus U_3 \rightarrow U_1 \oplus U_2 \oplus U_2 \oplus U_2 \rightarrow \Lambda[1]
\]
 in ${\mathsf K}^{{\rm b}}(\proj \Lambda)$. Thus $T_{\lambda}=U_3$ and $T_{\rho}=U_1 \oplus U_2=U$. By Theorem \ref{main}, $\mathcal{W}^{T}$ is the $\theta$-semistable subcategory of $\Mod \Lambda$ for the above $\theta$. In particular, we have $\mathcal{W}^{T}=\mathcal{W}_{U}=\add \Bigl(\hspace{-1mm}\mbox{\footnotesize$\renewcommand{\arraystretch}{0.6}\begin{array}{c} 2\\ 3\\ \end{array}$}\hspace{-1mm}\Bigr)$, as the second row of the right column in Table \ref{taubijex} shows.


\section{Proofs of our results}


\subsection{Preliminary}\label{prelim}
 We recall unexplained terminologies and the bijections of Theorem \ref{MSresult} from \cite{Ai,AI,AIR,ASS,KV}.

 Let ${\mathcal S}$ be a full subcategory of $\Mod \Lambda$. We call ${\mathcal S}$ a {\it torsion class} (resp., {\it torsion free class}) if it is closed under extensions and quotients (resp., extensions and submodules) \cite{ASS}. For subcategories ${\mathcal T}$ and ${\mathcal F}$ of $\Mod \Lambda$, a pair $({\mathcal T},{\mathcal F})$ is called a {\it torsion pair} if ${\mathcal T}={}^{\perp}{\mathcal F}$ and ${\mathcal F}={\mathcal T}^{\perp}$. Then ${\mathcal T}$ is a torsion class and ${\mathcal F}$ is a torsion free class. Conversely, any torsion class (resp., torsion free class) gives rise to a torsion pair. We call ${\mathcal S}$ {\it functorially finite} if any $\Lambda$-module admits both a left and a right ${\mathcal S}$-approximation. More precisely, for any $M \in \Mod \Lambda$, there are morphisms $g_1 : M \rightarrow S_1$ and $g_2 : S_2 \rightarrow M$ with $S_1, S_2 \in {\mathcal S}$ such that ${\rm Hom}_{\Lambda}(g_1,S)$ and ${\rm Hom}_{\Lambda}(S,g_2)$ are surjective for any $S \in {\mathcal S}$. Then $g_1$ is called a left ${\mathcal S}$-approximation of $M$ and $g_2$ is called a right ${\mathcal S}$-approximation of $M$. We call ${\mathcal S}$ {\it left finite} if the minimal torsion class containing ${\mathcal S}$ is functorially finite (see \cite{As}).

 Let $T \in \Mod \Lambda$. We call $T$ {\it $\tau$-rigid} if ${\rm Hom}_{\Lambda}(T,\tau T)=0$, where $\tau$ is the Auslander-Reiten translation of $\Mod \Lambda$ \cite{AIR}. We call $T$ {\it support $\tau$-tilting} if $T$ is $\tau$-rigid and $|T|=|\Lambda/\langle e \rangle|$ for some idempotent $e$ of $\Lambda$ such that $e T=0$, where $|T|$ is the number of non-isomorphic indecomposable direct summands of $T$.

 Let $P \in {\mathsf K}^{{\rm b}}(\proj \Lambda)$. We call $P$ {\it presilting} if ${\rm Hom}_{{\mathsf K}^{{\rm b}}(\proj \Lambda)}(P,P[i])=0$ for any $i>0$ \cite{Ai,AI,KV}. We call $P$ {\it silting} if $P$ is presilting and satisfies $\thick P = {\mathsf K}^{{\rm b}}(\proj \Lambda)$, where $\thick P$ is the smallest subcategory of ${\mathsf K}^{{\rm b}}(\proj \Lambda)$ containing $P$ which is closed under shifts, cones and direct summands. We say that $P=(P^i,d^i)$ is {\it two-term} if $P^i=0$ for all $i \neq 0,-1$. We denote by ${\rm 2 \mathchar`-presilt}\Lambda$ (resp., ${\rm 2 \mathchar`-silt}\Lambda$) the set of isomorphism classes of basic two-term presilting (resp., silting) complexes in ${\mathsf K}^{{\rm b}}(\proj \Lambda)$.

 The bijections of Theorem \ref{MSresult} are given in the following way \cite[Theorem 2.7, 3.2]{AIR}\cite[Theorem 3.10]{MS}\cite[Theorem]{S}:
  \begin{eqnarray*}
    (1')\rightarrow(1) &:& T \mapsto {\rm H}^0(T),\\
    (1')\rightarrow(2) &:& T \mapsto \mathcal{T}_T^{-}=\Fac {\rm H}^0(T), \hspace{18mm} (1')\rightarrow(2') \hspace{2mm} : \hspace{2mm} T \mapsto \mathcal{F}_T^{+}=\Sub {\rm H}^{-1}(\nu T),\\
    (2)\rightarrow(2') &:& {\mathcal T} \mapsto {\mathcal T}^{\perp}, \hspace{39mm} (2)\leftarrow(2') \hspace{2mm} : \hspace{2mm} {}^{\perp}{\mathcal F} \hspace{0.7mm} \mbox{\reflectbox{$\mapsto$}} \hspace{0.7mm} {\mathcal F},\\
    (1')\rightarrow(3) &:& T \mapsto \mathcal{W}^{T}=\Fac {\rm H}^0(T) \cap {\rm H}^0(T_{\rho})^{\perp},
  \end{eqnarray*}
where ${\rm H}^i(T)$ is the $i$-th cohomology of $T$. Recall that we have
\begin{equation}\label{silttor}
 ({}^{\perp}{\rm H}^{-1}(\nu T),\Sub {\rm H}^{-1}(\nu T))=(\mathcal{T}_T^{+},\mathcal{F}_T^{+})=(\mathcal{T}_T^{-},\mathcal{F}_T^{-})=(\Fac {\rm H}^0(T),{\rm H}^0(T)^{\perp})
\end{equation}
for $T \in {\rm 2 \mathchar`-silt}\Lambda$ \cite[Proposition 2.16]{AIR}.


\subsection{Linear forms on Grothendieck groups}

 Let $\Lambda$ be a finite dimensional algebra over a field $k$. Let $K_0(\Mod \Lambda)$ and $K_0(\proj \Lambda)$ be the Grothendieck groups of the abelian category $\Mod \Lambda$ and the exact category $\proj \Lambda$ with only split short exact sequences, respectively. Then we have natural isomorphisms $K_0(\Mod \Lambda) \simeq K_0({\mathsf D}^{{\rm b}}(\Mod \Lambda))$ and $K_0(\proj \Lambda) \simeq K_0({\mathsf K}^{{\rm b}}(\proj \Lambda))$. Moreover, $K_0(\Mod \Lambda)$ has a basis consisting of the isomorphism classes $S_i$ of simple $\Lambda$-modules, and $K_0(\proj \Lambda)$ has a basis consisting of the isomorphism classes $P_i$ of indecomposable projective $\Lambda$-modules, where ${\rm top}P_i=S_i$.

 The Euler form is a non-degenerate pairing between $K_0(\proj \Lambda)$ and $K_0(\Mod \Lambda)$ given by
\begin{equation}\label{Euler}
\langle P,M \rangle := \sum_{i \in {\mathbb Z}}(-1)^i{\rm dim}_k{\rm Hom}_{{\mathsf D}^{{\rm b}}(\Mod \Lambda)}(P,M[i])
\end{equation}
 for any $P \in {\mathsf K}^{{\rm b}}(\proj \Lambda)$ and $M \in {\mathsf D}^{{\rm b}}(\Mod \Lambda)$. Then $\{P_i\}$ and $\{S_i\}$ are dual bases of each other. In particular, we have a ${\mathbb Z}$-linear form $\langle P,-\rangle : K_0(\Mod \Lambda) \rightarrow {\mathbb Z}$ for $P \in {\mathsf K}^{{\rm b}}(\proj \Lambda)$.

 Recall that there is a Serre duality, that is, a bifunctorial isomorphism
\begin{equation}\label{nakayama}
 {\rm Hom}_{{\mathsf D}^{{\rm b}}(\Mod \Lambda)}(P,M) \simeq D{\rm Hom}_{{\mathsf D}^{{\rm b}}(\Mod \Lambda)}(M,\nu P)
\end{equation}
for $P \in {\mathsf K}^{{\rm b}}(\proj \Lambda)$ and $M \in {\mathsf D}^{{\rm b}}(\Mod \Lambda)$. The following observation is basic.

\begin{lemma}\label{eulerform}
 Let $P$ be a two-term complex in ${\mathsf K}^{{\rm b}}(\proj \Lambda)$. For $M \in \Mod \Lambda$, we have
{\setlength\arraycolsep{0.5mm}
\begin{eqnarray}
 \langle P,M\rangle &=& {\rm dim}_{k}{\rm Hom}_{\Lambda}({\rm H}^{0}(P),M)-{\rm dim}_{k}{\rm Hom}_{{\mathsf D}^{{\rm b}}(\Mod \Lambda)}(P,M[1]) \label{eulerform1}\\
 &=& {\rm dim}_{k}{\rm Hom}_{{\mathsf D}^{{\rm b}}(\Mod \Lambda)}(M,\nu P)-{\rm dim}_{k}{\rm Hom}_{\Lambda}(M,{\rm H}^{-1}(\nu P)). \label{eulerform2}
\end{eqnarray}}
\end{lemma}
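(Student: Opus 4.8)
The plan is to write $P=(P^{-1}\xrightarrow{d}P^{0})$ concentrated in cohomological degrees $-1$ and $0$, and to compute both sides directly from the definition (\ref{Euler}) of the Euler form, reducing the derived Hom-spaces to ordinary ones.

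For (\ref{eulerform1}): since $P$ is a complex of projectives concentrated in degrees $-1,0$, for any $M \in \Mod \Lambda$ the space ${\rm Hom}_{{\mathsf D}^{{\rm b}}(\Mod \Lambda)}(P,M[i])$ is the $i$-th cohomology of the two-term complex ${\rm Hom}_{\Lambda}(P^{0},M)\rightarrow{\rm Hom}_{\Lambda}(P^{-1},M)$ placed in degrees $0,1$; hence it vanishes unless $i\in\{0,1\}$, and (\ref{Euler}) collapses to $\langle P,M\rangle={\rm dim}_{k}{\rm Hom}_{{\mathsf D}^{{\rm b}}(\Mod \Lambda)}(P,M)-{\rm dim}_{k}{\rm Hom}_{{\mathsf D}^{{\rm b}}(\Mod \Lambda)}(P,M[1])$. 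It then remains only to identify the degree-$0$ term: applying ${\rm Hom}_{\Lambda}(-,M)$ to the right-exact sequence $P^{-1}\rightarrow P^{0}\rightarrow{\rm H}^{0}(P)\rightarrow 0$ yields ${\rm Hom}_{\Lambda}({\rm H}^{0}(P),M)=\ker\bigl({\rm Hom}_{\Lambda}(P^{0},M)\rightarrow{\rm Hom}_{\Lambda}(P^{-1},M)\bigr)={\rm Hom}_{{\mathsf D}^{{\rm b}}(\Mod \Lambda)}(P,M)$, which gives (\ref{eulerform1}).

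For (\ref{eulerform2}) I would feed the Serre duality isomorphism (\ref{nakayama}) into (\ref{Euler}), so that ${\rm dim}_{k}{\rm Hom}_{{\mathsf D}^{{\rm b}}(\Mod \Lambda)}(P,M[i])={\rm dim}_{k}{\rm Hom}_{{\mathsf D}^{{\rm b}}(\Mod \Lambda)}(M,\nu P[-i])$; after the substitution $j=-i$ (which leaves the sign unchanged, since $(-1)^{i}=(-1)^{-j}=(-1)^{j}$) this gives $\langle P,M\rangle=\sum_{j}(-1)^{j}{\rm dim}_{k}{\rm Hom}_{{\mathsf D}^{{\rm b}}(\Mod \Lambda)}(M,\nu P[j])$. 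Now $\nu P$ is a complex of injectives concentrated in degrees $-1,0$, so ${\rm Hom}_{{\mathsf D}^{{\rm b}}(\Mod \Lambda)}(M,\nu P[j])$ is the $j$-th cohomology of ${\rm Hom}_{\Lambda}(M,(\nu P)^{-1})\rightarrow{\rm Hom}_{\Lambda}(M,(\nu P)^{0})$ placed in degrees $-1,0$, and the sum collapses to ${\rm dim}_{k}{\rm Hom}_{{\mathsf D}^{{\rm b}}(\Mod \Lambda)}(M,\nu P)-{\rm dim}_{k}{\rm Hom}_{{\mathsf D}^{{\rm b}}(\Mod \Lambda)}(M,\nu P[-1])$. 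Finally, applying ${\rm Hom}_{\Lambda}(M,-)$ to the left-exact sequence $0\rightarrow{\rm H}^{-1}(\nu P)\rightarrow(\nu P)^{-1}\rightarrow(\nu P)^{0}$ identifies the degree-$(-1)$ term with ${\rm Hom}_{\Lambda}(M,{\rm H}^{-1}(\nu P))$, yielding (\ref{eulerform2}).

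There is no real obstacle here; the statement is essentially bookkeeping. The only points deserving care are the justification that the derived Hom-spaces are computed by the naive Hom-complex cohomology --- legitimate precisely because $P$ consists of projectives and $\nu P$ of injectives, so no projective or injective replacement is needed --- and keeping the sign conventions straight under the reindexing $i\mapsto -i$ forced by Serre duality.
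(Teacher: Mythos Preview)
Your proof is correct and follows essentially the same approach as the paper's: both arguments use the two-term hypothesis to collapse the Euler form to the $i=0,1$ terms, identify ${\rm Hom}_{{\mathsf D}^{{\rm b}}(\Mod \Lambda)}(P,M)$ with ${\rm Hom}_{\Lambda}({\rm H}^{0}(P),M)$, and then invoke Serre duality (\ref{nakayama}) for the second equality. The paper merely states these steps tersely (``Since $P$ is two-term\dots Moreover\dots Similarly\dots''), whereas you have written out the Hom-complex computations and the exact-sequence identifications explicitly.
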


\begin{proof}
 Since $P$ is two-term, ${\rm Hom}_{{\mathsf D}^{{\rm b}}(\Mod \Lambda)}(P,M[i])=0$ holds for any $i \neq 0, 1$. Moreover, we have ${\rm Hom}_{{\mathsf D}^{{\rm b}}(\Mod \Lambda)}(P,M)={\rm Hom}_{\Lambda}({\rm H}^{0}(P),M)$. Thus (\ref{eulerform1}) holds. Similarly, (\ref{eulerform2}) follows from (\ref{nakayama}).
\end{proof}


\subsection{Proofs of Theorems \ref{main} and \ref{presiltsemistable}}

 First, we make preparations to prove Theorem \ref{presiltsemistable}.

\begin{lemma}\label{torsilt}
 For $U \in {\rm 2 \mathchar`-presilt}\Lambda$, we have
{\setlength\arraycolsep{0.5mm}
\begin{eqnarray}
 \mathcal{T}_U^{+}&=&\{M \in \Mod \Lambda \mid {\rm Hom}_{{\mathsf D}^{{\rm b}}(\Mod \Lambda)}(U,M[1])=0\},\label{torsil}\\
 \mathcal{F}_U^{-}&=&\{M \in \Mod \Lambda \mid {\rm Hom}_{{\mathsf D}^{{\rm b}}(\Mod \Lambda)}(M,\nu U)=0\}.\label{torfsil}
\end{eqnarray}}
\end{lemma}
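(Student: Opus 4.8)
The plan is to unwind the definitions $\mathcal{T}_U^{+}={}^{\perp}{\rm H}^{-1}(\nu U)$ and $\mathcal{F}_U^{-}={\rm H}^0(U)^{\perp}$ and translate each Hom-vanishing condition into the other, using only that $U$ (hence also $\nu U$) is two-term, together with Serre duality (\ref{nakayama}).

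For (\ref{torfsil}), first I would recall that, $U$ being two-term, ${\rm Hom}_{{\mathsf D}^{{\rm b}}(\Mod \Lambda)}(U,M)={\rm Hom}_{\Lambda}({\rm H}^0(U),M)$ for $M\in\Mod \Lambda$, as already observed in the proof of Lemma \ref{eulerform}. Feeding this into (\ref{nakayama}) yields a functorial isomorphism ${\rm Hom}_{\Lambda}({\rm H}^0(U),M)\simeq D{\rm Hom}_{{\mathsf D}^{{\rm b}}(\Mod \Lambda)}(M,\nu U)$, so one side is zero exactly when the other is; since the left-hand side vanishes precisely when $M\in{\rm H}^0(U)^{\perp}=\mathcal{F}_U^{-}$, this gives (\ref{torfsil}).

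For (\ref{torsil}), I would run a dimension count comparing the two expressions (\ref{eulerform1}) and (\ref{eulerform2}) for $\langle U,M\rangle$ in Lemma \ref{eulerform}. Their first terms, ${\rm dim}_k{\rm Hom}_{\Lambda}({\rm H}^0(U),M)$ and ${\rm dim}_k{\rm Hom}_{{\mathsf D}^{{\rm b}}(\Mod \Lambda)}(M,\nu U)$, agree by the isomorphism just obtained, so equating the two expressions and cancelling forces
\[
{\rm dim}_k{\rm Hom}_{{\mathsf D}^{{\rm b}}(\Mod \Lambda)}(U,M[1])={\rm dim}_k{\rm Hom}_{\Lambda}(M,{\rm H}^{-1}(\nu U))
\]
for every $M\in\Mod \Lambda$; hence ${\rm Hom}_{{\mathsf D}^{{\rm b}}(\Mod \Lambda)}(U,M[1])=0$ if and only if $M\in{}^{\perp}{\rm H}^{-1}(\nu U)=\mathcal{T}_U^{+}$, which is (\ref{torsil}). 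Alternatively, one can avoid the dimension count: Serre duality gives ${\rm Hom}_{{\mathsf D}^{{\rm b}}(\Mod \Lambda)}(U,M[1])\simeq D{\rm Hom}_{{\mathsf D}^{{\rm b}}(\Mod \Lambda)}(M,\nu U[-1])$, and then the truncation triangle ${\rm H}^{-1}(\nu U)\to\nu U[-1]\to{\rm H}^0(\nu U)[-1]\to{\rm H}^{-1}(\nu U)[1]$ (available because $\nu U$ sits in degrees $-1,0$), together with the vanishing of negative Ext-groups between modules, identifies ${\rm Hom}_{{\mathsf D}^{{\rm b}}(\Mod \Lambda)}(M,\nu U[-1])$ with ${\rm Hom}_{\Lambda}(M,{\rm H}^{-1}(\nu U))$.

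There is no genuinely hard step here: the statement is a formal consequence of the two-term hypothesis and Serre duality, and the presilting assumption on $U$ is not even needed for these two set-theoretic identities (it is only what makes $(\mathcal{T}_U^{\pm},\mathcal{F}_U^{\pm})$ torsion pairs). The one point requiring care is the cohomological bookkeeping — checking that the auxiliary terms arising when one passes between $U$, $\nu U$ and their cohomologies genuinely vanish — and this is exactly where being two-term is used.
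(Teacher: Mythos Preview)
Your proposal is correct and essentially matches the paper's argument. In fact, the paper proves (\ref{torsil}) exactly by your ``alternative'' route: Serre duality (\ref{nakayama}) gives $D{\rm Hom}_{{\mathsf D}^{{\rm b}}(\Mod \Lambda)}(U,M[1])\simeq{\rm Hom}_{{\mathsf D}^{{\rm b}}(\Mod \Lambda)}(M[1],\nu U)\simeq{\rm Hom}_{\Lambda}(M,{\rm H}^{-1}(\nu U))$, and then (\ref{torfsil}) is dispatched ``similarly''; your dimension-count variant via Lemma \ref{eulerform} is a harmless detour to the same conclusion.
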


\begin{proof}
 For $M \in \Mod \Lambda$, by (\ref{nakayama}) we have
\[
 D{\rm Hom}_{{\mathsf D}^{{\rm b}}(\Mod \Lambda)}(U,M[1]) \simeq {\rm Hom}_{{\mathsf D}^{{\rm b}}(\Mod \Lambda)}(M[1],\nu U) \simeq {\rm Hom}_{\Lambda}(M,{\rm H}^{-1}(\nu U)).
\]
 Thus (\ref{torsil}) holds. Similarly, (\ref{torfsil}) holds.
\end{proof}

 Using ${\mathbb R}$-linear forms on $K_0(\Mod \Lambda) \otimes_{\mathbb Z} {\mathbb R}$, we have the following properties of torsion pairs $(\mathcal{T}_U^{+},\mathcal{F}_U^{+})$ and $(\mathcal{T}_U^{-},\mathcal{F}_U^{-})$ for $U \in {\rm 2 \mathchar`-presilt}\Lambda$.

\begin{proposition}\label{thetavalue}
 Let $U \in {\rm 2 \mathchar`-presilt}\Lambda$ and $\theta$ the corresponding ${\mathbb R}$-linear form on $K_0(\Mod \Lambda) \otimes_{\mathbb Z} {\mathbb R}$ defined in Theorem \ref{presiltsemistable}. For $M \in \Mod \Lambda$, the following assertions hold.\par
 (a) If $M \in \mathcal{T}_U^{+}$, then $\theta(M) \ge 0$. Moreover, if $M \in \mathcal{T}_U^{-}$ is non-zero, then $\theta(M) > 0$.\par
 (b) If $M \in \mathcal{F}_U^{-}$, then $\theta(M) \le 0$. Moreover, if $M \in \mathcal{F}_U^{+}$ is non-zero, then $\theta(M) < 0$.
\end{proposition}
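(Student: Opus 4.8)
The plan is to compute $\theta(M)=\sum_{X}a_{X}\langle X,M\rangle$ one indecomposable direct summand $X$ of $U$ at a time, exploiting that each such $X$ is again a two-term complex, so that Lemma \ref{eulerform} applies to it. Formula (\ref{eulerform1}) writes $\langle X,M\rangle$ as $\dim_{k}{\rm Hom}_{\Lambda}({\rm H}^{0}(X),M)-\dim_{k}{\rm Hom}_{{\mathsf D}^{{\rm b}}(\Mod \Lambda)}(X,M[1])$, and (\ref{eulerform2}) writes it as $\dim_{k}{\rm Hom}_{{\mathsf D}^{{\rm b}}(\Mod \Lambda)}(M,\nu X)-\dim_{k}{\rm Hom}_{\Lambda}(M,{\rm H}^{-1}(\nu X))$. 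The whole argument rests on the observation that on $\mathcal{T}_U^{+}$ the negative term of (\ref{eulerform1}) vanishes, and on $\mathcal{F}_U^{-}$ the positive term of (\ref{eulerform2}) vanishes; this is exactly Lemma \ref{torsilt}, applied to each summand $X$.

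For (a): if $M\in\mathcal{T}_U^{+}$ then (\ref{torsil}) gives ${\rm Hom}_{{\mathsf D}^{{\rm b}}(\Mod \Lambda)}(U,M[1])=0$, hence ${\rm Hom}_{{\mathsf D}^{{\rm b}}(\Mod \Lambda)}(X,M[1])=0$ for every summand $X$, so (\ref{eulerform1}) yields $\langle X,M\rangle=\dim_{k}{\rm Hom}_{\Lambda}({\rm H}^{0}(X),M)\ge 0$ and thus $\theta(M)\ge 0$. For the strict assertion I use that $\mathcal{T}_U^{-}=\Fac{\rm H}^{0}(U)\subseteq\mathcal{T}_U^{+}$, so the same computation applies to a nonzero $M\in\mathcal{T}_U^{-}$; it then suffices to exhibit one summand $X$ with ${\rm Hom}_{\Lambda}({\rm H}^{0}(X),M)\ne 0$. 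A nonzero surjection ${\rm H}^{0}(U)^{n}\twoheadrightarrow M$ shows ${\rm Hom}_{\Lambda}({\rm H}^{0}(U),M)\ne 0$, and since ${\rm H}^{0}$ is additive we have ${\rm H}^{0}(U)=\bigoplus_{X}{\rm H}^{0}(X)$, so ${\rm Hom}_{\Lambda}({\rm H}^{0}(X),M)\ne 0$ for some $X$; as $a_{X}>0$ and all the remaining summands are $\ge 0$, we conclude $\theta(M)>0$.

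Part (b) is dual, using (\ref{eulerform2}), (\ref{torfsil}) and the inclusion $\mathcal{F}_U^{+}=\Sub{\rm H}^{-1}(\nu U)\subseteq\mathcal{F}_U^{-}$. If $M\in\mathcal{F}_U^{-}$ then ${\rm Hom}_{{\mathsf D}^{{\rm b}}(\Mod \Lambda)}(M,\nu X)=0$ for each summand $X$, so $\langle X,M\rangle=-\dim_{k}{\rm Hom}_{\Lambda}(M,{\rm H}^{-1}(\nu X))\le 0$ and $\theta(M)\le 0$; and if moreover $M\in\mathcal{F}_U^{+}$ is nonzero, a nonzero monomorphism $M\hookrightarrow{\rm H}^{-1}(\nu U)^{n}$ together with additivity of $\nu$ and ${\rm H}^{-1}$ produces a summand $X$ with ${\rm Hom}_{\Lambda}(M,{\rm H}^{-1}(\nu X))\ne 0$, whence $\theta(M)<0$.

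I do not expect a genuine obstacle here: once Lemma \ref{eulerform}, Lemma \ref{torsilt} and the nestings $\mathcal{T}_U^{-}\subseteq\mathcal{T}_U^{+}$, $\mathcal{F}_U^{+}\subseteq\mathcal{F}_U^{-}$ are in place, the argument is purely formal. The only point that needs a little care is the passage, in the strict inequalities, from non-vanishing of a map out of a power of ${\rm H}^{0}(U)$ (resp.\ into a power of ${\rm H}^{-1}(\nu U)$) to non-vanishing of ${\rm Hom}_{\Lambda}$ against a single indecomposable summand, which is precisely where additivity of ${\rm H}^{0}$ and of $\nu$ is invoked.
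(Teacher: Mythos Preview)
Your proof is correct and follows the same approach as the paper: both use Lemma \ref{eulerform} (formulas (\ref{eulerform1}) and (\ref{eulerform2})) together with Lemma \ref{torsilt} to reduce $\theta(M)$ to a sum of nonnegative (resp.\ nonpositive) terms. The paper is terser about the strict inequalities, simply writing ``Since $a_X>0$ holds for any $X$, (a) holds'' after displaying $\theta(M)=\sum_X a_X\dim_k{\rm Hom}_\Lambda({\rm H}^0(X),M)$; your explicit argument via $\mathcal{T}_U^{-}=\Fac{\rm H}^0(U)$ and additivity of ${\rm H}^0$ spells out exactly what is being left implicit there.
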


\begin{proof}
 Let $M \in \mathcal{T}_U^{+}$. By (\ref{eulerform1}) and (\ref{torsil}), we have
\[
\theta(M) = \sum_{X} a_{X}{\rm dim}_{k}{\rm Hom}_{\Lambda}({\rm H}^{0}(X),M),
\]
where $X$ runs over all indecomposable direct summands of $U$. Since $a_X>0$ holds for any $X$, (a) holds. Similarly, (b) holds by (\ref{eulerform2}) and (\ref{torfsil}).
\end{proof}


Now, we are ready to prove Theorem \ref{presiltsemistable}.

\begin{proof}[Proof of Theorem \ref{presiltsemistable}]
 Let $M \in \mathcal{W}_U=\mathcal{T}_U^{+} \cap \mathcal{F}_U^{-}$. Then $\theta(M)=0$ holds by Proposition \ref{thetavalue}. Since $\mathcal{F}_U^{-}$ is a torsion free class, any submodule $L$ of $M$ is also belongs to $\mathcal{F}_U^{-}$. Thus $\theta(L) \le 0$ holds by Proposition \ref{thetavalue}(b). Therefore, $M$ is $\theta$-semistable.

 Conversely, assume that $M \in \Mod \Lambda$ is $\theta$-semistable. Since $(\mathcal{T}_U^{-},\mathcal{F}_U^{-})$ is a torsion pair, there is an exact sequence
\[
0 \rightarrow L \rightarrow M \rightarrow N \rightarrow 0,
\]
where $L \in \mathcal{T}_U^{-}$ and $N \in \mathcal{F}_U^{-}$. Since $\theta(L)\le0$ holds, we have $L=0$ by Proposition \ref{thetavalue}(a). Thus $M=N \in \mathcal{F}_U^{-}$. Similarly, taking a canonical sequence of $M$ with respect to the torsion pair $(\mathcal{T}_U^{+},\mathcal{F}_U^{+})$, we have $M \in \mathcal{T}_U^{+}$. Thus $M \in \mathcal{T}_U^{+} \cap \mathcal{F}_U^{-} = \mathcal{W}_U$ holds.
\end{proof}


 Next, we make preparations to prove Theorem \ref{main}. For $T \in {\rm 2 \mathchar`-silt}\Lambda$, we have the following characterization of the corresponding torsion pairs $(\mathcal{T}_T^{+},\mathcal{F}_T^{+})$ and $(\mathcal{T}_T^{-},\mathcal{F}_T^{-})$ in $\Mod \Lambda$.

\begin{lemma}\label{trivial}
 Let $T=T_{\lambda} \oplus T_{\rho} \in {\rm 2 \mathchar`-silt}\Lambda$ as in (\ref{triangle}). The following equalities hold.
\begin{enumerate}
 \item[(a)] $\mathcal{T}_T^{+}=\mathcal{T}_T^{-}=\Fac {\rm H}^{0}(T_{\lambda})={}^{\perp}{\rm H}^{-1}(\nu T_{\rho})$.
 \item[(b)] $\mathcal{F}_T^{+}=\mathcal{F}_T^{-}={\rm H}^0(T_{\lambda})^{\perp}=\Sub {\rm H}^{-1}(\nu T_{\rho})$.
\end{enumerate}
\end{lemma}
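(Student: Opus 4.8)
The plan is to reduce everything to the identities (\ref{silttor}) for silting complexes together with the triangle (\ref{triangle}). Recall from (\ref{silttor}) that for $T \in {\rm 2 \mathchar`-silt}\Lambda$ the four torsion pairs coincide, so it already follows that $\mathcal{T}_T^{+}=\mathcal{T}_T^{-}=\Fac {\rm H}^{0}(T)$ and $\mathcal{F}_T^{+}=\mathcal{F}_T^{-}={\rm H}^0(T)^{\perp}$. Thus the content of the lemma is the extra equalities expressing this torsion pair in terms of the single summand $T_\lambda$ on the projective-presentation side and the single summand $T_\rho$ on the Nakayama side. I would prove (a) and deduce (b) by the $D$-duality/Serre-duality symmetry already used in Lemmas \ref{eulerform} and \ref{torsilt}.

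For (a), first I would show $\Fac {\rm H}^{0}(T)=\Fac {\rm H}^{0}(T_{\lambda})$. The inclusion $\supseteq$ is clear since $T_\lambda$ is a direct summand of $T$. For $\subseteq$, apply the cohomology functor ${\rm H}^0$ to the triangle (\ref{triangle}): since $\Lambda$ is concentrated in degree $0$ and $T',T''$ are two-term, one gets an exact sequence $\Lambda \to {\rm H}^0(T') \to {\rm H}^0(T'') \to 0$ (the connecting map lands in ${\rm H}^1$ of a two-term complex, hence vanishes), and ${\rm H}^0(T')$ generates $\Lambda$ as required; because $\add T'=\add T_\lambda$ and $\add T''=\add T_\rho$, this shows ${\rm H}^0(\Lambda)=\Lambda \in \Fac {\rm H}^0(T_\lambda)$, hence $\Fac{\rm H}^0(T)\subseteq\Fac\Lambda=\Mod\Lambda$ is not yet what I want — rather, the point is that every projective, in particular every ${\rm H}^0(P_i)$, and hence ${\rm H}^0(T_\rho)$, lies in $\Fac {\rm H}^0(T_\lambda)$, giving $\Fac {\rm H}^0(T)=\Fac({\rm H}^0(T_\lambda)\oplus {\rm H}^0(T_\rho))\subseteq \Fac {\rm H}^0(T_\lambda)$. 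Next I would identify $\Fac {\rm H}^0(T_\lambda)$ with ${}^{\perp}{\rm H}^{-1}(\nu T_{\rho})$: by Lemma \ref{torsilt} applied to the presilting complex $U=T_\rho$ we have $\mathcal{T}_{T_\rho}^{+}={}^{\perp}{\rm H}^{-1}(\nu T_\rho)=\{M \mid {\rm Hom}_{{\mathsf D}^{\rm b}}(T_\rho,M[1])=0\}$, so it remains to see that for $M$ this vanishing is equivalent to $M \in \Fac {\rm H}^0(T_\lambda)$. Applying ${\rm Hom}_{{\mathsf D}^{\rm b}}(-,M[1])$ to (\ref{triangle}) and using that $T',T''$ are two-term and $\Lambda$ is projective (so ${\rm Hom}(\Lambda,M[1])=0={\rm Hom}(\Lambda,M[2])$), one obtains ${\rm Hom}_{{\mathsf D}^{\rm b}}(T'',M[1])\cong{\rm Hom}_{{\mathsf D}^{\rm b}}(T',M[1])$; combined with (\ref{silttor}) — which gives $\mathcal{T}_T^{+}=\{M\mid {\rm Hom}(T,M[1])=0\}=\{M\mid{\rm Hom}(T',M[1])=0={\rm Hom}(T'',M[1])\}$ — the two conditions ${\rm Hom}(T',M[1])=0$ and ${\rm Hom}(T'',M[1])=0$ are equivalent, so $\mathcal{T}_T^{+}=\{M\mid{\rm Hom}(T_\rho,M[1])=0\}={}^{\perp}{\rm H}^{-1}(\nu T_\rho)$. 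Chaining these identifications yields (a).

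For (b) I would argue dually. Applying $D$ and Serre duality (\ref{nakayama}) to the triangle (\ref{triangle}) turns it into a triangle in ${\mathsf D}^{\rm b}(\Mod\Lambda)$ relating $\nu\Lambda=D\Lambda$, $\nu T'$ and $\nu T''$, and the same degree bookkeeping (now $\nu\Lambda$ is injective, concentrated in degree $0$) shows that ${\rm Hom}_{{\mathsf D}^{\rm b}}(M,\nu T_\lambda)$-vanishing is equivalent to ${\rm Hom}_{{\mathsf D}^{\rm b}}(M,\nu T_\rho)$-vanishing for $M \in \Mod\Lambda$, using again that the total condition ${\rm Hom}(M,\nu T)=0$ defines the common torsion-free class $\mathcal{F}_T^{-}={\rm H}^0(T)^\perp=\Sub{\rm H}^{-1}(\nu T)$ by (\ref{silttor}) and Lemma \ref{torsilt}. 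Then ${\rm H}^0(T)^\perp={\rm H}^0(T_\lambda)^\perp$ follows because every injective lies in $\Sub{\rm H}^{-1}(\nu T_\lambda)$ by the dual of the argument in the second paragraph, and $\Sub{\rm H}^{-1}(\nu T_\rho)=\mathcal{F}_{T_\rho}^{-}$ is identified with $\{M\mid{\rm Hom}(M,\nu T_\rho)=0\}$ by Lemma \ref{torsilt}, which we just matched to ${\rm H}^0(T_\lambda)^\perp$.

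I expect the main obstacle to be the careful homological bookkeeping with the triangle (\ref{triangle}): one must track that the connecting morphisms land in degrees where the relevant Hom- or cohomology groups vanish (using two-termness of $T',T''$ and projectivity/injectivity of $\Lambda$, $\nu\Lambda$), so that "the condition on $T$" genuinely splits as "the condition on $T_\lambda$" plus "the condition on $T_\rho$" and, crucially, that these two pieces are in fact each equivalent to the whole. Everything else is an application of (\ref{silttor}), Lemma \ref{torsilt}, and the $D$-duality symmetry, so once the bookkeeping is pinned down the proof is short.
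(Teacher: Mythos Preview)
Your opening move and the first equality in (a) match the paper exactly: take ${\rm H}^0$ of the triangle~(\ref{triangle}) to get $\Lambda \to {\rm H}^0(T') \to {\rm H}^0(T'') \to 0$, and conclude $\Fac {\rm H}^0(T)=\Fac {\rm H}^0(T_\lambda)$. But your justification of this step is garbled. The sequence does \emph{not} show ``$\Lambda \in \Fac {\rm H}^0(T_\lambda)$'', nor that ``every projective lies in $\Fac {\rm H}^0(T_\lambda)$'' (both are generally false, and ${\rm H}^0(T_\rho)$ is not projective anyway). What it shows is simply that ${\rm H}^0(T') \to {\rm H}^0(T'')$ is surjective; since $\add T'=\add T_\lambda$ and $\add T''=\add T_\rho$, this gives ${\rm H}^0(T_\rho)\in\Fac {\rm H}^0(T_\lambda)$ directly. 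That is the paper's one-line argument.

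For the last equality in (a) your Hom-vanishing route contains a genuine error. Applying ${\rm Hom}_{{\mathsf D}^{\rm b}}(-,M[1])$ to (\ref{triangle}) does \emph{not} produce an isomorphism ${\rm Hom}(T'',M[1])\cong{\rm Hom}(T',M[1])$: the relevant piece of the long exact sequence is
\[
{\rm Hom}_\Lambda(\Lambda,M)\to{\rm Hom}_{{\mathsf D}^{\rm b}}(T'',M[1])\to{\rm Hom}_{{\mathsf D}^{\rm b}}(T',M[1])\to{\rm Hom}_{{\mathsf D}^{\rm b}}(\Lambda,M[1])=0,
\]
and the left term is typically nonzero (your invocation of ${\rm Hom}(\Lambda,M[2])=0$ is irrelevant here). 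So the two vanishing conditions are not equivalent; only the implication ${\rm Hom}(T_\rho,M[1])=0\Rightarrow{\rm Hom}(T_\lambda,M[1])=0$ holds. That one-sided implication is actually sufficient --- combined with $\mathcal{T}_T^{+}=\{M\mid{\rm Hom}(T,M[1])=0\}$ it still yields $\mathcal{T}_T^{+}=\{M\mid{\rm Hom}(T_\rho,M[1])=0\}={}^{\perp}{\rm H}^{-1}(\nu T_\rho)$ --- but as written your argument is incorrect. (For a concrete counterexample to the equivalence, take $T=\Lambda[1]$: then $T_\lambda=0$, $T_\rho=\Lambda[1]$, and ${\rm Hom}(T_\lambda,M[1])=0$ for all $M$ while ${\rm Hom}(T_\rho,M[1])\cong{\rm Hom}_\Lambda(\Lambda,M)$ rarely vanishes.)

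The paper avoids this detour entirely: for the $T_\rho$-equalities it argues \emph{dually} at the module level. Applying $\nu$ and ${\rm H}^{-1}$ to (\ref{triangle}) gives an exact sequence $0\to{\rm H}^{-1}(\nu T')\to{\rm H}^{-1}(\nu T'')\to D\Lambda$, hence ${\rm H}^{-1}(\nu T_\lambda)\in\Sub{\rm H}^{-1}(\nu T_\rho)$, so ${}^{\perp}{\rm H}^{-1}(\nu T)={}^{\perp}{\rm H}^{-1}(\nu T_\rho)$ and $\Sub{\rm H}^{-1}(\nu T)=\Sub{\rm H}^{-1}(\nu T_\rho)$. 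This is shorter and perfectly symmetric to the first step. Your treatment of (b) inherits the same problems; note also that you wrote $\Sub{\rm H}^{-1}(\nu T_\rho)=\mathcal{F}_{T_\rho}^{-}$, whereas by definition this is $\mathcal{F}_{T_\rho}^{+}$.
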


\begin{proof}
 By (\ref{silttor}), we have $\mathcal{T}_T^{+}=\mathcal{T}_T^{-}$ and $\mathcal{F}_T^{+}=\mathcal{F}_T^{-}$.

 Applying ${\rm H}^{0}(-)$ to the triangle $\Lambda \rightarrow T' \rightarrow T'' \rightarrow \Lambda[1]$ in (\ref{triangle}), we have an exact sequence
\[
 \Lambda \rightarrow {\rm H}^{0}(T') \rightarrow {\rm H}^{0}(T'') \rightarrow 0
\]
in $\Mod \Lambda$. Thus ${\rm H}^{0}(T_{\rho}) \in \Fac {\rm H}^{0}(T_{\lambda})$ holds. Hence we have $\mathcal{T}_T^{-}=\Fac {\rm H}^{0}(T)=\Fac {\rm H}^{0}(T_{\lambda})$ and $\mathcal{F}_T^{-}={\rm H}^0(T)^{\perp}={\rm H}^0(T_{\lambda})^{\perp}$. Dually, the equations $\mathcal{T}_T^{+}={}^{\perp}{\rm H}^{-1}(\nu T_{\rho})$ and $\mathcal{F}_T^{+}=\Sub {\rm H}^{-1}(\nu T_{\rho})$ hold.
\end{proof}

 The following observation gives a connection between two constructions $\mathcal{W}^{(-)}$ and $\mathcal{W}_{(-)}$ of wide subcategories.


\begin{lemma}\label{2wide}
 Let $T=T_{\lambda} \oplus T_{\rho} \in {\rm 2 \mathchar`-silt}\Lambda$. Then $\mathcal{W}^{T}=\mathcal{W}_{T_{\rho}}$ holds.
\end{lemma}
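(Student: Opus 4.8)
The plan is to compare the two wide subcategories by unwinding both definitions in terms of the torsion pairs already available, and to show that the extra vanishing conditions built into $\mathcal{W}^{T}$ are automatically satisfied once one restricts to $\mathcal{T}_{T_\rho}^{+}\cap\mathcal{F}_{T_\rho}^{-}$, and conversely. Recall that by definition $\mathcal{W}^{T}=\Fac{\rm H}^0(T)\cap{\rm H}^0(T_\rho)^{\perp}$, while $\mathcal{W}_{T_\rho}=\mathcal{T}_{T_\rho}^{+}\cap\mathcal{F}_{T_\rho}^{-}$ with $\mathcal{T}_{T_\rho}^{+}={}^{\perp}{\rm H}^{-1}(\nu T_\rho)$ and $\mathcal{F}_{T_\rho}^{-}={\rm H}^0(T_\rho)^{\perp}$. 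So the second factors already match on the nose, and the whole problem reduces to showing
\[
\Fac{\rm H}^0(T)\cap{\rm H}^0(T_\rho)^{\perp}={}^{\perp}{\rm H}^{-1}(\nu T_\rho)\cap{\rm H}^0(T_\rho)^{\perp}.
\]

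First I would establish the inclusion $\mathcal{W}^{T}\subseteq\mathcal{W}_{T_\rho}$. Take $M\in\Fac{\rm H}^0(T)\cap{\rm H}^0(T_\rho)^{\perp}$. Since $T\in{\rm 2\text{-}silt}\Lambda$, Lemma \ref{trivial} gives $\Fac{\rm H}^0(T)=\mathcal{T}_T^{+}={}^{\perp}{\rm H}^{-1}(\nu T_\rho)$, so $M\in{}^{\perp}{\rm H}^{-1}(\nu T_\rho)=\mathcal{T}_{T_\rho}^{+}$, and combined with $M\in{\rm H}^0(T_\rho)^{\perp}=\mathcal{F}_{T_\rho}^{-}$ this yields $M\in\mathcal{W}_{T_\rho}$. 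For the reverse inclusion, take $M\in{}^{\perp}{\rm H}^{-1}(\nu T_\rho)\cap{\rm H}^0(T_\rho)^{\perp}$; again by Lemma \ref{trivial}(a), ${}^{\perp}{\rm H}^{-1}(\nu T_\rho)=\mathcal{T}_T^{+}=\Fac{\rm H}^0(T)$, so $M\in\Fac{\rm H}^0(T)\cap{\rm H}^0(T_\rho)^{\perp}=\mathcal{W}^{T}$. Thus the two subcategories coincide, and the identification $\mathcal{T}_{T_\rho}^{+}=\mathcal{T}_T^{+}$ from Lemma \ref{trivial} is precisely the bridge that makes everything match.

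The step I expect to carry the real weight is the invocation of Lemma \ref{trivial}(a), specifically the equality $\mathcal{T}_T^{+}={}^{\perp}{\rm H}^{-1}(\nu T_\rho)$ together with $\mathcal{T}_T^{-}=\Fac{\rm H}^0(T)$: the point is that although $T_\rho$ is only presilting (not silting), the torsion pair $(\mathcal{T}_{T_\rho}^{+},\mathcal{F}_{T_\rho}^{+})=({}^{\perp}{\rm H}^{-1}(\nu T_\rho),\Sub{\rm H}^{-1}(\nu T_\rho))$ attached to it has first component equal to the (larger) torsion class $\mathcal{T}_T^{+}$ of the full silting complex $T$, and its second component $\mathcal{F}_{T_\rho}^{-}={\rm H}^0(T_\rho)^{\perp}$ is the one that cuts things back down. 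This is a genuinely non-formal input coming from \cite{AIR,AI}, and once it is in hand the proof of Lemma \ref{2wide} is the short chain of equalities above. There is no serious obstacle beyond correctly matching the four pieces; the only care needed is to keep straight which of $\mathcal{T}_{(-)}^{+}$, $\mathcal{T}_{(-)}^{-}$, $\mathcal{F}_{(-)}^{+}$, $\mathcal{F}_{(-)}^{-}$ is being used and to note that for the presilting complex $T_\rho$ these are generally distinct, whereas for the silting complex $T$ they collapse by (\ref{silttor}).
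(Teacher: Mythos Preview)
Your proof is correct and follows essentially the same approach as the paper: both arguments reduce the claim to the identification $\Fac{\rm H}^0(T)={}^{\perp}{\rm H}^{-1}(\nu T_\rho)$ supplied by Lemma~\ref{trivial}(a), together with the observation that the factor ${\rm H}^0(T_\rho)^{\perp}=\mathcal{F}_{T_\rho}^{-}$ already matches. The paper simply records this as a one-line chain of equalities, whereas you split it into two inclusions with additional commentary, but the content is identical.
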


\begin{proof}
 There are equalities
\[
\mathcal{W}^{T}\overset{(\ref{W^T})}{\scalebox{3}[1]{=}}\mathcal{T}_T^{-} \cap {\rm H}^0(T_{\rho})^{\perp}\overset{\mbox{{\tiny Lemma }} \ref{trivial}(a)}{\scalebox{8}[1]{=}}{}^{\perp}{\rm H}^{-1}(\nu T_{\rho}) \cap {\rm H}^0(T_{\rho})^{\perp}=\mathcal{T}_{T_{\rho}}^{+} \cap \mathcal{F}_{T_{\rho}}^{-}=\mathcal{W}_{T_{\rho}}. \qedhere
\]
\end{proof}


 This result enables us to prove Theorem \ref{main}.

\begin{proof}[Proof of Theorem \ref{main}]
 The assertion immediately follows from Lemma \ref{2wide} and Theorem \ref{presiltsemistable}.
\end{proof}

\medskip\noindent{\bf Acknowledgements}.
The author is a Research Fellow of Society for the Promotion of Science (JSPS). This work was supported by JSPS KAKENHI Grant Number JP17J04270.\par
The author would like to thank his supervisor Osamu Iyama for his guidance and helpful advice. He is also grateful to Laurent Demonet for helpful comments.

\end{document}